\def\@rmrk#1#2{\refstepcounter
    {#1}\@ifnextchar[{\@yrmrk{#1}{#2}}{\@xrmrk{#1}{#2}}}
\makeatletter\@addtoreset{equation}{section}\makeatother
 \newfont{\bfit}{cmbxti10 scaled 1200}
 \newcommand{\eps}{\varepsilon}
 \newcommand{\R}{\mathbb{R}}
 \newcommand{\N}{\mathbb{N}}
 \newcommand{\1}{{\sf 1}}
 \newcommand{\heap}[2]{\genfrac{}{}{0pt}{}{#1}{#2}}
 \newcommand{\sfrac}[2]{\mbox{$\frac{#1}{#2}$}}
 \newcommand{\ssup}[1] {{\scriptscriptstyle{({#1}})}}
\renewcommand{\subsection}{\secdef \subsct\sbsect}
\newcommand{\subsct}[2][default]{\refstepcounter{subsection}
\vspace{0.15cm}
{\flushleft\bf \arabic{section}.\arabic{subsection}~\bf #1  }
\nopagebreak\nopagebreak}
\newcommand{\sbsect}[1]{\vspace{0.1cm}\noindent
{\bf #1}\vspace{0.1cm}}
\newtheorem{theorem}{Theorem}[section]
\newtheorem{lemma}[theorem]{Lemma}
\newtheorem{prop}[theorem]{Proposition}
\renewcommand{\P}{\mathbb{P}}
\newcommand{\E}{\mathbb{E}}
\newcommand{\X}{\mathcal X}
\newcommand{\Y}{\mathcal Y}   
\newcommand{\setZ}{\mathcal Z}
\newcommand{\V}{\mathcal V}
\newcommand{\x}{\mathbf{x}}
\newcommand{\y}{\mathbf{y}}
\newcommand{\T}{\mathbb{T}}
\newcommand{\set}[1]{\left\{ #1 \right\}}
\newcommand{\abs}[1]{\left| #1 \right|}
\newtheoremstyle{thm}{1.5ex}{1.5ex}{\itshape\rmfamily}{}
{\bfseries\rmfamily}{}{2ex}{}
\newtheoremstyle{rem}{1.3ex}{1.3ex}{\rmfamily}{}
{\itshape\rmfamily}{}{1.5ex}{}
\theoremstyle{rem}
\def\thebibliography#1{\section*{References}
  \list%
  {\arabic{enumi}.}
    {\settowidth\labelwidth{[#1]}\leftmargin\labelwidth
    \advance\leftmargin\labelsep
    \parsep0pt\itemsep0pt
    \usecounter{enumi}}
    \def\newblock{\hskip .11em plus .33em minus .07em}
    \sloppy                   
    \sfcode`\.=1000\relax}
\begin{document}

\title[The age-dependent random connection model]
{The age-dependent random connection model}

\centerline{\LARGE \bf The age-dependent random connection model}

\vspace{0.4cm}

\thispagestyle{empty}
\vspace{0.3cm}
\textsc{Peter Gracar, Arne Grauer, Lukas L\"uchtrath and Peter M\"orters\footnote{Communicating author.}}\\
Mathematisches Institut, Universit\"at zu K\"oln, 50931 K\"oln, Germany\\
E--mail: \texttt{moerters@math.uni-koeln.de} 

\vspace{0.3cm}

\vspace{0.5cm}

\begin{quote}{\small {\bf Abstract: } We investigate a class of growing graphs embedded into the $d$-dimensional torus where new vertices arrive according to a Poisson process in time, are randomly placed in space and connect to existing vertices with a probability depending on time, their spatial distance and their relative ages. This  simple model for a scale-free network is called the \emph{age-based spatial preferential attachment network} and is based on the idea of preferential attachment with spatially induced clustering. We show that the graphs converge weakly locally to a variant of the random connection model, which we call the \emph{age-dependent random connection model}. This is a natural infinite graph on a Poisson point process where points are marked by a uniformly distributed age and connected with a probability depending on their spatial distance and both ages. We use the limiting structure to investigate asymptotic degree distribution, clustering coefficients and typical edge lengths in the age-based spatial preferential attachment network. }
\end{quote}
\vspace{0.4cm}

{\footnotesize
{\bf MSc classification (2010):} Primary 05C80 Secondary 60K35.\\[-5mm]

{\bf Keywords:} Scale-free networks, Benjamini-Schramm limit, random connection model, preferential attachment,
\mbox{geometric} random graphs, spatially embedded graphs, clustering coefficient, power-law degree distribution, edge lengths.}
\bigskip



\section{Motivation: Scale-free networks and clustering} \label{Section:Motivation}

Networks arising in different contexts, be it social, communication, technological or biological networks, often have strikingly similar features.  The question of interest to us is why this is the case. Can these features be explained by a few basic principles underpinning the construction of these networks?
\medskip

The probabilistic methodology to approach this question is to build a network model as a growing sequence of random graphs defined from very simple interaction principles and to prove the emerging features in the form of limit theorems when the  number of vertices is going to infinity. The purpose of this paper is to present a tractable network model which is based on simple and natural construction principles and for which such limit theorems can be proved rigorously. We will call this model the \emph{age-based preferential attachment model}.
\medskip

Potential features of networks we are interested in include:
\begin{itemize}
	\item Networks are \emph{scale-free}: 
	For very large network size and 
	large $k$ the proportion $\mu(k)$ of nodes with exactly $k$ neighbours is of order $k^{-\tau+o(1)}$
	for some \emph{power law exponent $\tau$}.\smallskip
	
	\item Networks are \emph{ultrasmall}: 
	The shortest path between two randomly chosen nodes in the graph is doubly logarithmic
	in the number of vertices. \smallskip
	
	\item Networks are \emph{robust} under random attack: 
	If an arbitrarily large 
	proportion of links is  randomly removed from the network the qualitative topological features of 
	the network remain unchanged.\smallskip
	
	
	\item Networks are \emph{vulnerable} under targeted attack: Even if only a small number of 
	the most influential nodes are removed, the topological features of the network change dramatically.
	\smallskip
	
	\item Networks show strong \emph{clustering}: Nodes picked from the neighbourhood of a typical
	node have a much higher chance of being connected by a link than randomly picked nodes.
	
	%
\end{itemize}
These features should {emerge} solely from the principles on which our model rests. In this paper the focus is on the scale-free and clustering properties of our model. We also believe that the other properties hold in certain parameter ranges, these (harder) properties are left for future research we will undertake.

The simple building principles for our network are:
\begin{itemize}
	\item { They are {built dynamically} by adding nodes successively.}
	\item { When a new node is introduced, it prefers to establish links to existing nodes that are either
		\begin{itemize} 
			\item {powerful} or old;
			\item or {similar} to the new node.
	\end{itemize}}
\end{itemize}
The idea of building a network by connecting incoming nodes to existing nodes with a probability depending increasingly on their power
was introduced into network theory by Barab\'{a}si and Albert~\cite{BA99}. They use the degree of the existing node as the indicator of its power;
we speak of \emph{degree-based preferential attachment}. There is now a substantial body of work showing rigorously that the resulting networks are 
scale-free~\cite{bollobas} and, for power-law exponent $\tau<3$, they are  ultrasmall and robust~\cite{Dommers2010, dereich2012, dereich2013}. The key technical tool in the proofs of the latter properties is the coupling of neighbourhoods of typical vertices to well-studied random tree models often coming from the genealogy of branching processes. This technique rests therefore crucially on the absence of clustering, as clustering leads to presence of short cycles that destroy the local tree structure of the graphs.
\medskip

To include clustering the idea of preferential attachment has to be developed further. An attractive approach is to include the idea that nodes have individual 
features and similarity of the features of nodes is a further incentive to form links between two nodes. This is realised by embedding the graphs into space 
and giving preference to short edges. We speak of \emph{spatially induced clustering}. 
Spatial preferential attachment models were studied by Manna and Sen~\cite{MS02}, Flaxman, Frieze and 
Vera~\cite{FlaxmanFriezeVera2006, FlaxmanFriezeVera2007},  Aiello, Bonato, Cooper, Janssen and 
Pra\l at~\cite{AielloEtAll2008}, Jordan~\cite{Jordan2010, Jordan2013}, Janssen, Pra\l at, Wilson~\cite{JanssenPralatWilson2013}, Jordan and Wade~\cite{JordanWade2015}, and Jacob and M\"orters~\cite{JacobMoerters2015, JacobMoerters2017}.
\medskip

The spatial preferential attachment models studied in these papers appear to be too complicated to fully characterise features like robustness or ultrasmallness. We therefore propose a simpler spatial model where preferential attachment is not to vertices with high degree but to vertices with old age; we speak of \emph{age-based preferential attachment}. Age-based models are easier to study because while the actual degree of a vertex depends in a complex way on the graph geometry, the age of a vertex is a given quantity. At the same time there is a strong link between degree and age as the expected degree is a simple function of the age of a vertex. Our simplification therefore removes complicated but (on a large scale) inessential correlations between edges and allows us to focus on the important correlations, namely those coming from the spatial embedding. 
\medskip

\pagebreak[3]

\section{The age-based spatial preferential attachment network} \label{Section:ABPAN}

The \emph{age-based spatial preferential attachment model} is a {growing sequence of graphs $(G_t)_{t>0}$ in continuous time}. The vertices of
the graphs are embedded in the $d$-dimensional torus $\mathbb{T}_1^d=(-1/2,1/2]^d$ of side-length one, endowed with the torus metric $d$ defined by
\begin{align*}
d(x,y)=\min\left\{\left| x-y+u\right| \colon u\in\{-1,0,1\}^d\right\} \mbox{for $x,y\in\mathbb{T}_1^d$,}
\end{align*}
where here and throughout the paper $\abs{\cdot}$ denotes the Euclidean norm.
Vertices are denoted by $\mathbf{y}=(y,s)$ and they are characterised by their birthtime $s>0$ and by their position $y\in \mathbb{T}_1^d$.
\smallskip

At time $t=0$ the graph $G_0$ has no vertices or edges. Then
\begin{itemize}
	\item Vertices arrive accroding to a standard Poisson process in time 
		and are placed independently uniformly on the $d$-dimensional torus~$\mathbb{T}_1^d$.
	\item Given the graph $G_{t-}$ a vertex  $\mathbf{x}=(x,t)$ born at time $t$ and placed in position $x$
		is connected by an edge to each existing node $\mathbf{y}=(y,s)$ {independently} with probability 
		\begin{align}\varphi\bigg(\frac{t \cdot d(x,y)^d}{\beta\cdot \big(\frac{t}s\big)^\gamma}\bigg),\label{constructionProb}
		\end{align}
\end{itemize}
	where 
\begin{enumerate}
	\item[(a)] $\varphi\colon[0,\infty)\rightarrow[0,1]$ is the \textit{profile function}. It is nonincreasing, integrable and normalized in the sense that
	\begin{align}
		\int\limits_{\mathbb{R}^d} \varphi(| x|^d)\, dx=1. \label{profileNormalization}
    \end{align} 
    The profile function can be used to control the occurrence of long edges.\smallskip
    \item[(b)]  $\gamma\in(0,1)$ is a parameter that quantifies the strength of the preferential attachment mechanism. We shall see that it alone determines the power-law exponent of the network.\smallskip
    \item[(c)] $\beta\in(0,\infty)$ is a parameter to control the edge density, which is asymptotically equal to $\frac{\beta}{1-\gamma}$, hence the smaller  $\beta$, the sparser the graph.
\end{enumerate}
\medskip

Some comments on our choices in \eqref{constructionProb} are in order.
\begin{enumerate}
	\item[(i)] For any $r>0$, the profile function $\varphi$ and parameter $\beta$ define the same model as the profile function $x\mapsto \varphi(rx)$ and parameter $r\beta$. Hence the normalization convention \eqref{profileNormalization} represents no loss of generality. Similary, if the intensity of the arrival process is taken as $\lambda>0$ the process $(G_{t/\lambda})_{t>0}$ is the original process with the same profile function $\varphi$ and parameter $\beta\lambda$.
	\smallskip
	\item[(ii)]
	The form of the connection probability \eqref{constructionProb} is natural for the following reasons: To ensure that the probability of a new vertex connecting to its nearest neighbour does not degenerate, as $t\rightarrow\infty$, it is necessary to scale $d(x,y)$ by $t^{-1/d}$, which is the order of the distance of a point to its nearest neighbour at time $t$. Further, the integrability condition of $\varphi$ ensures that the expected number of edges connecting a new vertex to the already existing ones, remains bounded from zero and infinity, as $t\rightarrow\infty$.\smallskip
	\item[(iii)]
	In the degree-based spatial preferential attachment model of Jacob and M\"orters~\cite{JacobMoerters2015}, the term $(t/s)^\gamma$ that creates the age dependence in our model is replaced by a function of the indegree, the number of younger vertices $\mathbf{y}$ is connected to at time $t$. If this function is asymptotically linear with slope $\gamma$, the network is scale-free with power-law exponent $\tau=1+\frac1\gamma$. In this case, the expected indegree is of order $(t/s)^\gamma$ so that the models remain comparable and this is the natural choice to ensure that our network model will be scale-free.      
\item[(iv)]
For the profile function $\varphi$, one has different choices. We normally assume that $\varphi$ is either regularly varying at infinity with index $-\delta$, for some $\delta>1$, or $\varphi$ decays quicker than any regularly varying function, in which case we set $\delta=\infty$. In the latter case a natural choice is to consider $\varphi(x)=\frac{1}{2a}\mathbbm{1}_{[0,a]}(x)$ for $a\geq 1/2$. In this case, a vertex born at time $s$ is linked to a new vertex at time $t$ with probability $1/(2a)$ if and only if their positions are within distance $$\left(\sfrac1t \beta a\left(t/s\right)^\gamma \right)^{1/d}.$$ In the case $a=1/2$, the profile function $\varphi$ only takes the values zero and one, thus the decision is not random and we connect two vertices whenever they are close enough. The degree-based preferential attachment model in discrete time for this choice of $\varphi$ was introduced in \cite{AielloEtAll2008} and further studied in \cite{CooperFriezePralat2012} and \cite{JanssenPralatWilson2013}. This particular choice for the profile function helps to get a better understanding of the problems and properties of this model, see for example Section~\ref{Section:Clustering}. However, for our ultimate purpose this choice is too restrictive as it does not allow the networks to be robust or ultrasmall. 
\end{enumerate}

In the following sections, we use $g=o(h)$ to indicate that $g/h$ converges to zero, $g\asymp h$ if $g/h$ is bounded from zero and infinity and $g\sim h$ if $g/h$ converges to one. 

\section{Weak local limit: the age-dependent random connection model} \label{Section:WeakLocalLimit}
In this section, we introduce a graphical representation of the network $G_t$. This representation allows a simple rescaling, and the rescaled graphs turn out to converge to a limiting graph, which is denoted as the \textit{age-dependent random connection model}. This also turns out to be the weak local limit of the graph sequence $(G_t)_{t>0}$, which enables us to achieve results for the network $(G_t)_{t\geq 0}$ by studying the age-dependent random connection model.
\medskip

Let $\mathcal{X}$ denote a Poisson point process of unit intensity on $\R^d\times(0,\infty)$. We say a point $\mathbf{x}=(x,s)\in\mathcal{X}$ is  born at time $s$ and placed at position $x$. Observe that, almost surely, two points of $\mathcal{X}$ neither have the same birth time nor the same position. We say that $(x,s)$ is \textit{older} than $(y,u)$ if $s<u$. For $t>0$ write $\mathcal{X}_t$ for $\mathcal{X}\cap(\mathbb{T}_1^d\times(0,t])$, the set of vertices on the torus already born at time $t$. We denote by 
	\[E(\X):=\set{(\x,\y)\in\mathcal{X} \times \mathcal{X} \colon \x \text{ younger than }\y}\]
the set of \textit{potential edges} in $\X$. Given $\mathcal{X}$ we introduce a family $\V$ of independent random variables, uniformly distributed on $(0,1)$, indexed by the set of potential edges. We denote these variables by $\V_{\x,\y}$ or $\V(\x,\y)$. A realization of $\X_t$ and $\V_t$, defined as the restriction of $\mathcal{V}$ to indices in $\X_t \times \X_t$, defines a network $G(\X_t,\V_t)$ with vertex set $\X_t$ placing an edge between $\x=(x,u)$ and $\y=(y,s)$ with $s<u$, if and only if
	\begin{align}
\		\V(\x,\y)\leq\varphi\left(\frac{u\cdot d(x,y)^d}{\beta\,\left( \frac{u}{s}\right)^\gamma}\right). \label{canonicalConstruction}
	\end{align}
Observe that the graph sequence $(G(\X_t,\V_t))_{t>0}$ has the law of our age-based spatial preferential attachment model and is therefore constructed on the probability space carrying the Poisson process $\X$ and the sequence $\V$. 
Moreover,
$G$~extends to a deterministic mapping associating a graph structure to any locally finite set of points in $\mathcal{Y} \subseteq\mathbb{T}_a^d\times(0,\infty)$ and 
sequence $\mathcal{V}$ in $(0,1)$ indexed by $E(\Y)
=\{ (\x, \y) \in \mathcal{Y}\times\mathcal{Y} \colon \x \text{ younger than }\y \}$, where  \smash{$\mathbb{T}_a^d=(-\frac{1}{2}a^{1/d},\frac{1}{2}a^{1/d}]^d$}
is the torus of volume~$a$ equipped with its canonical metric~$d(\cdot, \cdot)$, and $\x$, $\y$ are connected if and only if \eqref{canonicalConstruction} holds.  We permit the case $a=\infty$, with $\mathbb{T}_\infty^d=\R^d$ equipped with the Euclidean metric.
\pagebreak[3]
\medskip

For finite $t>0$, we define the \textit{rescaling mapping}
\begin{center}
	\begin{tabular}{llll}
		$h_t:$ & $\mathbb{T}_1^d\times(0,t]$ & $\longrightarrow$ & $\mathbb{T}_t^d\times (0,1]$, \\
			   & $(x,s)$					 & $\longmapsto$	     & $\left(t^{1/d}x, s/t\right)$,
	\end{tabular}
\end{center}
which expands space by a factor of $t^{1/d}$ and time by a factor of $1/t$. The mapping $h_t$ operates canonically on the set $\X_t$ as well as on $\V_t$ by $h_t(\V_t)(h_t(\x),h_t(\y)):=\V_t(\x,\y)$, and also on graphs with vertex set in $\X_t$ by mapping points $\x$ to $h_t(\x)$ and 
introducing an edge between $h_t(\x)$ and $h_t(\y)$ if and only if  there is one between $\x$ and $\y$.
As
$$\varphi\left(\frac{u/t\cdot d(t^{1/d}x,t^{1/d}y)^d}{\beta\,\left( \frac{u/t}{s/t}\right)^\gamma}\right)
=\varphi\left(\frac{u\cdot d(x,y)^d}{\beta\,\left( \frac{u}{s}\right)^\gamma}\right)$$
the operation $h_t$ preserves the rule \eqref{canonicalConstruction} and therefore
	\[G(h_t(\X_t),h_t(\V_t))=h_t(G(\X_t,\V_t)).\]
In plain words, it is the same to construct the graph and then rescale the picture, or to first rescale the picture and then construct the graph on the rescaled picture,
see Figure~\ref{Figure:Transformation}.
\medskip

\begin{figure}[!h]
	\begin{center}
		\includegraphics[scale=0.25]{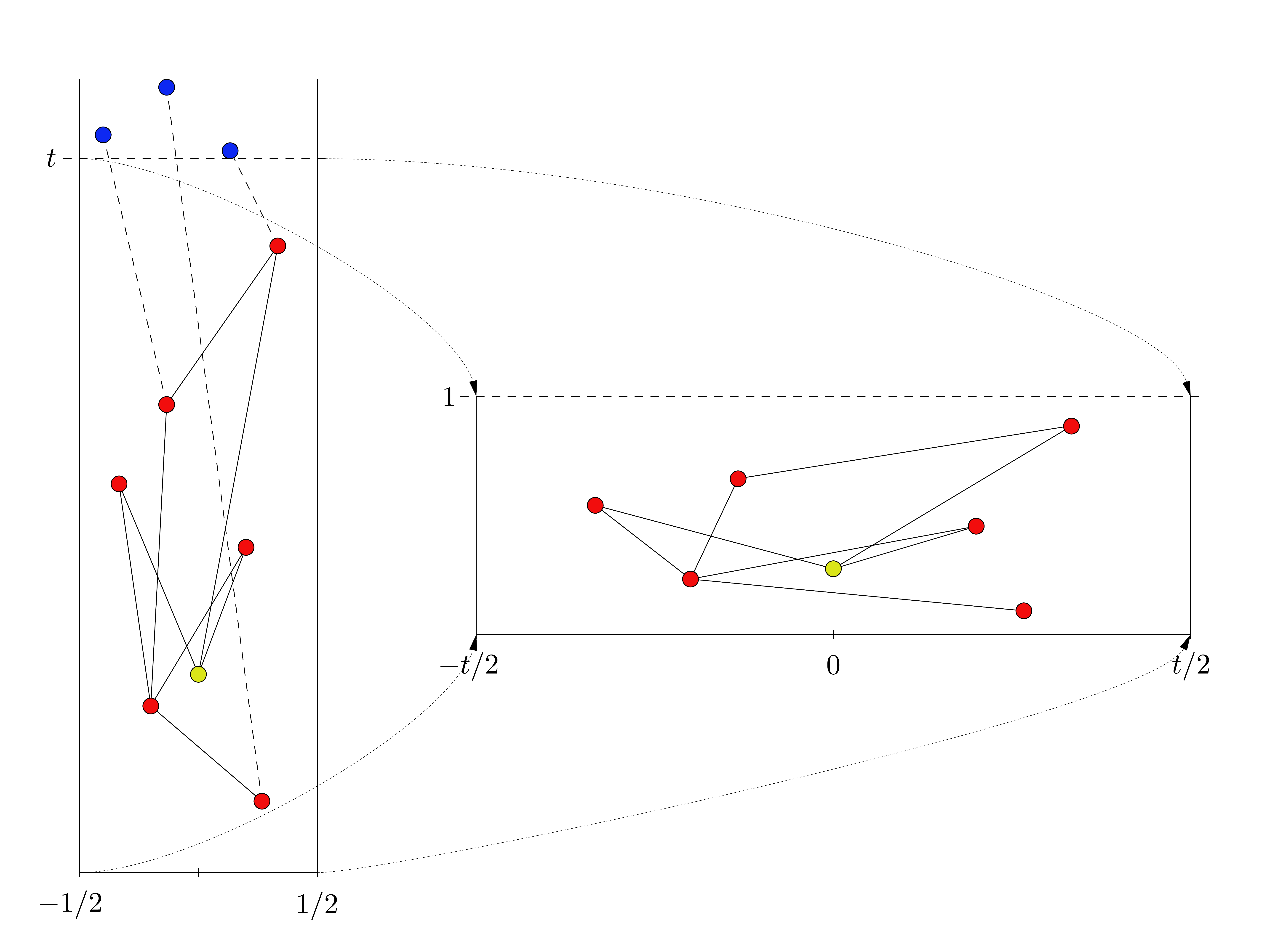}
		\caption{The graph $G_t$ on the left and its rescaling $h_t(G_t)$ on the right. The blue vertices are born after time $t$ and, therefore, the corresponding edges do not exist yet and the vertices are not part of the rescaled graph. The yellow vertex is placed at position $0$ and remains in the centre after the rescaling.}
		\label{Figure:Transformation}
	\end{center}
\end{figure}

 \pagebreak[3]
We now denote $\X^t=\mathcal{X}\cap(\mathbb{T}_t^d\times(0,1])$ and by $\V^t$ the restriction of $\mathcal{V}$ to indices in $\X^t \times \X^t$. This gives rise to a graph  $G^t:=G(\X^t,\V^t)$. As $h_t(\X_t)$ is a Poisson point process of unit intensity on $\mathbb{T}_t^d\times(0,1]$ and $h_t(\V_t)$ are independent uniform marks attached to the potential edges, for fixed finite $t$, the graph $G^t$ has the same law as $G(h_t(\X_t),h_t(\V_t))$ and therefore as $h_t(G_t)$. However, the \emph{process} $(G^t)_{t>0}$ behaves differently from the original process $(G_t)_{t>0}$. Indeed, while the degree of any fixed vertex in $(G_t)_{t>0}$ goes to infinity, 
the degree of any fixed vertex in $(G^t)_{t>0}$ stabilizes and the graph sequence converges to the graph $G^\infty:=G(\X^\infty, \V^\infty)$; see the theorem below.
\smallskip

In order to formulate also a local version of this convergence result we add a point at the origin to our Poisson process denoting $\X_0:=\X\cup \{(0,U)\}$ where $U$ is an independent, uniformly on $(0,1]$ distributed birth time. As before let $\V_0$ be a family of independent uniformly distributed random variables indexed by the potential edges in $\X_0$, and, for $0<t\leq\infty$, let $\X^t_0=\mathcal{X}_0\cap(\mathbb{T}_t^d\times(0,1])$ and denote by $\V_0^t$ the restriction of $\mathcal{V}_0$ to indices in  $\X_0^t \times \X_0^t$. We define rooted graphs $G^t_0:=G(\X_0^t, \V_0^t)$ with the root being the vertex placed at the origin. For $p>0$ define the class  ${\mathcal H}_p$ of nonnegative functions  $h$ acting on locally finite rooted graphs and depending only on a bounded graph neighbourhood of the root with the property that
$$\sup_{0<t<\infty} \E h(G^t_0)^p < \infty.$$

\begin{theorem}  \label{PropADRCM} ~\
	\begin{enumerate}
		\item[(i)] $G^\infty$ is almost surely locally finite, i.e.\ almost surely all its vertices have finite degree.
		\item[(ii)] Almost surely, the graph sequence  $(G^t)$ converges to $G^\infty$ in the sense that for each $\x\in\X^\infty$ the neighbours 
		of $\x$ in $G^t$ and in $G^\infty$ coincide for large $t$.
		\item[(iii)] In probability, the graph sequence  $(G_t)$ converges weakly locally to $G_0^\infty$ in the sense that for any $h\in{\mathcal H}_p$, $p>1$, we have
		\begin{equation}\label{LLN}
		\lim_{t\to\infty} \frac 1t \sum_{\x \in G_t} h( \theta_\x G_t) = \E h ( G_0^\infty) \qquad \mbox{ in probability,} 
		\end{equation}   
		where $\theta_\x$ acts on points $\y=(y,s)$ as $\theta_\x(\y)=(y-x,s)$ and on graphs accordingly.  
	\end{enumerate}	
\end{theorem}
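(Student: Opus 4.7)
My proof plan splits into the three claims.

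\textbf{Part (i).} For $\x=(x,s)$ fixed, Campbell's formula on $\X$ decomposes the expected degree of $\x$ in $G^\infty$ into contributions from older and younger neighbours:
\[
\E[\deg_{G^\infty}(\x)] = \int_s^1\!\!\int_{\R^d}\!\varphi\Bigl(\tfrac{u|x-y|^d}{\beta(u/s)^\gamma}\Bigr) dy\,du + \int_0^s\!\!\int_{\R^d}\!\varphi\Bigl(\tfrac{s|x-y|^d}{\beta(s/u)^\gamma}\Bigr) dy\,du.
\]
A radial substitution reduces each spatial integral to $1$ via \eqref{profileNormalization}, and integrating the remaining factor in $u$ yields the finite upper bound $\tfrac{\beta}{\gamma}(s^{-\gamma}-1)+\tfrac{\beta}{1-\gamma}$. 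Since $\X^\infty$ is countable, a.s.\ every vertex has finite degree.

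\textbf{Part (ii).} Fix $\x \in \X^\infty$; by (i) the set $N^\infty(\x)$ of its $G^\infty$-neighbours is a.s.\ finite. Once $t$ is large enough that $x$ together with each $y$ from this finite set lies deep inside the fundamental domain of $\mathbb{T}_t^d$, the torus metric $d_t$ agrees with the Euclidean one on these pairs, and the shared mark $\V(\x,\y)$ certifies the edge in $G^t$ exactly when it does in $G^\infty$. Conversely, a \emph{spurious} $\y$ (present in $G^t$ but absent from $G^\infty$) requires wrap-around, $d_t(x,y) < |x-y|$; by the Campbell formula its expected count equals
\[
\int_0^1\!\!\int_{\mathbb{T}_t^d}\bigl(\varphi(d_t(x,y)^d K(s,u))-\varphi(|x-y|^d K(s,u))\bigr)\,dy\,du,
\]
where $K(s,u)$ is the natural coupling factor; both integrands increase to $\int_{\R^d}\varphi(|z|^d K(s,u))\,dz = 1/K(s,u)$, so the difference vanishes. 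To promote $L^1$-smallness to almost-sure eventual absence, I would exploit that for each potential $\y$ the map $t\mapsto d_t(x,y)$ is eventually constant at $|x-y|$, so the edge status in $G^t$ stabilises after a finite threshold $T_\y$ depending only on $(x,y)$; combining this per-vertex stabilisation with Borel-Cantelli along a dyadic sequence of times yields the claim.

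\textbf{Part (iii).} Since $h$ depends only on a bounded graph neighbourhood and the rescaling $h_t$ preserves graph structure, $F_t := \tfrac{1}{t}\sum_{\x\in\X_t}h(\theta_\x G_t)$ has the same distribution as $\tfrac{1}{t}\sum_{\x\in\X^t}h(\theta_\x G^t)$. The Mecke formula together with translation invariance on the torus gives the Palm identity $\E F_t = \E h(G_0^t)$; combining (ii) applied to the rooted process $\X_0$ with the uniform $L^p$-bound ($p>1$) built into $\mathcal{H}_p$ yields $\E h(G_0^t) \to \E h(G_0^\infty)$. For concentration, the second Campbell formula gives
\[
\Var F_t = \tfrac{1}{t}\E[h(G_0^t)^2] + \tfrac{1}{t^2}\!\iint\!\bigl(\E[h(\theta_\x G^t)h(\theta_\y G^t)]-\E h(\theta_\x G^t)\E h(\theta_\y G^t)\bigr)\,d\x\,d\y.
\]
The first term vanishes by the $L^p$-bound. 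For the covariance, I would split $(\x,\y)$ into spatially close and far pairs: close pairs have measure $o(t^2)$ and their contribution is bounded by Hölder; for far pairs the bounded graph neighbourhoods of the two rooted graphs are asymptotically independent, since any shared vertex would require an edge so long that its probability is controlled by the tail of $\varphi$. The main obstacle is precisely this decoupling for far pairs: because heavy-tailed profiles $\varphi$ permit long edges that can entangle neighbourhoods of distant roots, a quantitative tail bound on the spatial diameter of bounded graph neighbourhoods (via the integrability of $\varphi$, and in the regularly varying case via its index $\delta$) is needed to conclude $\Var F_t \to 0$ and hence $F_t \to \E h(G_0^\infty)$ in probability.
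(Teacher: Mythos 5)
Your computations in (i) and the first-moment step of (iii) match the paper, but three steps do not close as written. In (i), the intensity calculation is fine, yet the passage from ``a fixed added point has a.s.\ finite degree'' to ``almost surely every vertex of $G^\infty$ has finite degree'' cannot be justified by ``$\X^\infty$ is countable'': the vertices form a \emph{random} countable set, so there is no fixed enumeration to which the fixed-point computation applies; you need the Mecke equation / refined Campbell theorem (the expected number of vertices of infinite degree is the integral of the Palm probability, which is zero) -- exactly the tool the paper invokes. In (ii), the difficulty is uniformity in $t$: as $t$ grows, new far-away vertices keep entering $\X^t$, and a wrap-around (spurious) neighbour at a time $t\in(2^k,2^{k+1})$ need not be spurious at either dyadic endpoint, since both the torus size and the vertex set change with $t$. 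Hence ``per-pair stabilisation plus Borel--Cantelli along dyadic times'' does not control the continuum of times nor the infinitely many candidate vertices; you would at least need a bound, summable over blocks, on the expected number of vertices that are spurious at \emph{some} time in each dyadic block (using that any wrap-around edge at time $t$ has torus length at least $t^{1/d}/2-|x|$). The paper sidesteps all of this with a single dominating model: delete the points within $2\sqrt{d}\,\abs{x}$ of $\x$ and move all remaining points closer to $\x$ by $2\sqrt{d}\,\abs{x}$; this model is a.s.\ locally finite and contains every edge that $\x$ has in any $G^t$ beyond that radius, which yields one a.s.\ finite radius $M$ valid for all $t$ simultaneously.

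In (iii) there are two gaps. First, your variance computation presupposes finite second moments, but ${\mathcal H}_p$ only provides a uniform $L^p$ bound for some $p>1$, possibly $p<2$ (and the paper's applications, e.g.\ to the indegree and to wedge counts, do use $1<p<2$); so the claim that the diagonal term $\tfrac1t\E[h(G_0^t)^2]$ ``vanishes by the $L^p$-bound'' is unjustified. The paper first proves $L^2$ convergence for \emph{bounded} $h$ and only then removes boundedness by truncating to $h\wedge k$ and using the uniform integrability coming from the $L^p$ bound; your plan needs this truncation step. Second, you explicitly leave the decoupling of spatially distant pairs as ``the main obstacle'', i.e.\ the key estimate of the concentration argument is named but not supplied. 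Once $h$ is bounded this is manageable along the paper's lines: choose $\eps>0$ to discard pairs containing a vertex born before $\eps$, then a radius $R$ such that, for all large $t$, the graph neighbourhood determining $h$ lies within spatial distance $R$ of the root for all but an arbitrarily small proportion of the vertices born after $\eps$; the exceptional vertices and the pairs at distance at most $R$ are negligible by boundedness of $h$, and on the remaining pairs the expectation factorizes, giving $\E[F_t^2]\to(\E h(G_0^\infty))^2$. As written, your proposal would not yield the theorem for general $h\in{\mathcal H}_p$, $p>1$.
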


Theorem~\ref{PropADRCM} will be proved in Section~4. The limiting graph $G^\infty$ in $(ii)$ is what we call the \textit{age-dependent random connection model}. This model is of independent interest as a natural generalisation of the random connection model, see Meester and Roy \cite{Meester96} or Last et al \cite{Last2018} for a recent paper. Like in the classical geometric random graph models points are placed according to a Poisson point process $\Y\subseteq\R^d$, but now every point additionally carries a mark drawn independently from the uniform distribution on $(0,1)$. Given points and marks, we independently connect two points in position $x$ with mark $u$, resp.\ position $y$ with mark $s$, with probability 
$$\varphi\big(\beta^{-1}(s\vee u)^{1-\gamma}(s\wedge u)^\gamma \cdot |x-y|^d \big).$$ 
The rooted graph $G_0^\infty$ occurring as the local limit is the \emph{Palm version} of the age-dependent random connection model $G^\infty$; loosely speaking the graph  $G^\infty$ with a typical point shifted to the origin.
\bigskip

{\bf Remarks:} 
\begin{itemize}
	\item Weak local limits were introduced by Benjamini and Schramm~\cite{benjamini2001} as distributional limits for determinstic sequences of finite graphs randomized by
	a uniform choice of root.   The result in~(iii) allows 
that $h$ additionally depends continuously on the ages of the vertices and the length of the edges if taken in the scaled graphs
	$h_t(\theta_\x G_t)$. Further generalisations of the results hold, see Yukich and Penrose~\cite{penrose2003} for seminal work on random geometric graphs
	and Jacob and M\"orters~\cite{JacobMoerters2015} for a similar proof in the case of the degree-based 
model which can be adapted to our situation. We will not need these more general results here.
	\smallskip

	\item The age-dependent random connection model is in a different universality class than other established models of infinite spatial scale-free graphs. For example the scale-free percolation model of Deijfen, van der Hofstad, Hooghiemstra~\cite{DeijfenHofstadHooghiemstra2013} and its continuous version studied by Deprez and W\"uthrich \cite{Deprez2018} correspond (when rewritten in our framework) to a connection probability of the form 
	$$\varphi\big(\beta^{-1} s^\gamma u^\gamma\cdot |x-y|^d \big).$$ 
	Models of this type do not arise naturally from sequences of growing finite random graphs on a fixed space as our model does. 
	\smallskip
	
	\item There is a similar convergence result for the degree-based spatial preferential attachment model, but the limiting graph is not as natural as the age-dependent random connection model as the existence of edges between vertices with given position and age depends in this graph on the existence of edges between the older vertex and other vertices that may lie arbitrarily far away, see Jacob and M\"orters~\cite{JacobMoerters2015}.
	
\end{itemize}

\section{Convergence of neighbourhoods  and degree distributions} \label{Section:DegreeDistr}

In this section we will study the asymptotic degree distribution and show that the age-based spatial preferential attachment model is scale-free. To this 
end we study the neighbourhood of a fixed vertex $\x=(x,u)$ in the graphs $G^t$. We think of edges as oriented from the younger to the older endvertex, so that the \emph{indegree} of $\x$ is the number of younger vertices that connect to it, and the \emph{outdegree} is the number of older vertices it connects to. As our construction is based on Poisson processes and conditionally independent edges, the indegree and outdegree of a fixed vertex are  independent and Poisson 
distributed. 

If $G$ is a graph with vertices in $\T_t^d\times(0,\infty)$, we write $\x\leftrightarrow\y$ to indicate that there is an edge between $\x$ and $\y$ in $G$.
Now, let $\x=(x,u)$ be a vertex in $G$ and define its older neighbours,
\begin{align*}
\Y_\x(G):= \set{\y=(y,s)\in G\colon \x\overset{}{\leftrightarrow}\y, s\leq u}, 
\intertext{and its younger neighbours born before time $s$,} 
\setZ_\x(s,G):=\set{\y=(y,r)\in G\colon \y\overset{}{\leftrightarrow}\x, u<r\leq s}.
\end{align*}
For $t\in(0,\infty]$ and $0<u<s\leq 1$, we write $\Y_\x^t:=\Y_\x(G^t)$ and $\setZ_\x^t(s):=\setZ_\x(s,G^t)$, adding the point $\x=(x,u)$ to the  underlying Poisson process $\mathcal X$ if it is not already there. 
%
\begin{prop}\label{Lemma:InOutprocess} ~\ 
	\begin{enumerate}
		\item[(a)] For every $t\in(0,\infty]$, the older neighbours $\Y_\x^t$ of $\x=(x,u)$ form a Poisson point process on $\mathbb{T}_t^d\times[0,u)$ with intensity measure
		\[\lambda_{\Y_\x^t}:=\varphi\left(\beta^{-1} u\left(\frac{s}{u}\right)^\gamma d(x,y)^d\right)\, dy\, ds.\]
		\item[(b)] For every $t\in(0,\infty]$, the younger neighbours $\setZ_\x^t(s_0)$ of $\x=(x,u)$  at time $s_0\in(u,1]$ form a Poisson point process on $\mathbb{T}_t^d\times(u,s_0]$ with intensity measure
		\[\lambda_{\setZ_\x^t(s_0)}:=\varphi\left(\beta^{-1} s\left(\frac{u}{s}\right)^\gamma d(x,y)^d\right)\, dy\, ds.\]
		\item[(c)] The outdegree of the origin in $G^\infty_0$ is Poisson distributed with parameter~$\frac\beta{1-\gamma}$ and independent of the age $U$ 
		of the origin. 
		\item[(d)] The indegree of the origin in $G^\infty_0$ is mixed Poisson distributed, where the mixing distributuion has the density 
		\begin{align}
		f(\lambda)=\beta^{1/\gamma}(\gamma \lambda +\beta)^{-(1+1/\gamma)}\qquad \mbox{ for $\lambda>0$.} \label{DensityLambda}
		\end{align}
	\end{enumerate} 
\end{prop}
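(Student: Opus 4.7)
The plan is to obtain (a) and (b) as direct consequences of the Poisson marking/thinning theorem applied to the construction in Section~\ref{Section:WeakLocalLimit}, and then to deduce (c) and (d) by elementary integrations against the age of the root.

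\emph{Parts (a) and (b).} Fix $t\in(0,\infty]$ and $\x=(x,u)$. Whether $\x$ already lies in $\mathcal X$ or is added deterministically (as in $G_0^\infty$), the remaining points form a Poisson process of unit intensity on $\T_t^d\times(0,1]$, independent of the family of marks $\V(\x,\cdot)$ attached to potential edges incident to $\x$; this is immediate from the canonical construction. Restricting to the older layer $\T_t^d\times[0,u)$, each potential neighbour $\y=(y,s)$ is retained as a true neighbour independently with probability
\[\varphi\Bigl(\tfrac{u\,d(x,y)^d}{\beta(u/s)^\gamma}\Bigr)=\varphi\bigl(\beta^{-1}u(s/u)^\gamma d(x,y)^d\bigr),\]
by \eqref{canonicalConstruction}. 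The Poisson thinning theorem then yields (a). Part (b) follows by the same argument applied to the strip $\T_t^d\times(u,s_0]$ of younger points with retention probability $\varphi(\beta^{-1}s(u/s)^\gamma d(x,y)^d)$.

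\emph{Part (c).} For $G_0^\infty$ we have $\x=(0,U)$ and $\T_\infty^d=\R^d$, and the outdegree is the total mass of the point process in (a). The change of variables $z=\bigl(\beta^{-1}u(s/u)^\gamma\bigr)^{1/d}y$ combined with \eqref{profileNormalization} gives
\[\int_{\R^d}\varphi\bigl(\beta^{-1}u(s/u)^\gamma|y|^d\bigr)\,dy=\beta\,u^{\gamma-1}s^{-\gamma},\]
and integrating $s$ over $[0,u)$ produces the mean $\int_0^u\beta u^{\gamma-1}s^{-\gamma}\,ds=\beta/(1-\gamma)$, which does not depend on $u$. Hence the outdegree is Poisson with this parameter. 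Independence from $U$ holds because, conditionally on $U=u$, the outdegree is measurable with respect to the Poisson points in $\R^d\times[0,u)$ and the marks $\V((0,u),\cdot)$, whose joint law does not depend on $u$.

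\emph{Part (d).} Applying (b) with $s_0=1$, conditionally on $U=u$ the indegree is Poisson with mean
\[\lambda(u)=\int_u^1\beta s^{\gamma-1}u^{-\gamma}\,ds=\tfrac{\beta}{\gamma}(u^{-\gamma}-1),\]
so the indegree is mixed Poisson with mixing variable $\Lambda=\lambda(U)$. Since $U$ is uniform on $(0,1)$, a one-line change of variables using $U=\beta^{1/\gamma}(\gamma\Lambda+\beta)^{-1/\gamma}$ yields the density
\[f(\lambda)=\bigl|\tfrac{du}{d\lambda}\bigr|=\beta^{1/\gamma}(\gamma\lambda+\beta)^{-(1+1/\gamma)}\qquad(\lambda>0),\]
as required. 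The whole proof is mechanical once the marking theorem is in place; the only point requiring care is the justification that the origin may be treated either as an independently marked extra point or as a typical Palm point, which is consistent by Slivnyak's theorem and by the definition of $\X_0$.
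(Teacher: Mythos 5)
Your proposal is correct and follows essentially the same route as the paper: parts (a) and (b) by independent thinning of the Poisson process of potential older/younger neighbours, part (c) by the normalisation \eqref{profileNormalization} giving a $u$-independent Poisson parameter $\beta/(1-\gamma)$, and part (d) by identifying the mixing density of $\Lambda=\tfrac\beta\gamma(U^{-\gamma}-1)$, which is the same change of variables the paper performs inside its integral over $u$. The only cosmetic difference is your explicit appeal to Slivnyak's theorem for the root, which the paper sidesteps by adding the point $(0,U)$ to $\X$ by construction.
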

\begin{proof}
	The older neighbours of $\x=(x,u)$ are all neighbours with birth time smaller than $u$, therefore, $\X\cap( \mathbb{T}_t^d\times[0,u))$ is the set of all potential vertices connected to $\x$ by an outgoing edge. Now, given $\X$ a vertex $\y=(y,s)\in\X\cap( \mathbb{T}_t^d\times[0,u))$ is connected to $\x$ independently with probability $\varphi(\beta^{-1} u^{1-\gamma}s^\gamma d(x,y)^d)$. Thus, $\Y_\x^t$ defines a thinning of $\X\cap( \mathbb{T}_t^d\times[0,u))$ and (a) follows. The analogous argument for the vertices in $\X\cap( \mathbb{T}_t^d\times(u,s_0])$ proves (b).
	\smallskip
	
	Applying (a) to $\x=(0,u)$ and $t=\infty$ gives that the number of older neighbours is Poisson distributed with parameter
		\begin{align*}
		\lambda_{\Y_\x^\infty}\big(\R^d\times[0,u]\big) &=\int_0^u\int_{\mathbb{R}^d}  
		\varphi\big(\beta^{-1} u^{1-\gamma}s^\gamma |y|^d\big) \,  dy \, ds 
		= \int_0^u \beta u^{\gamma-1}s^{-\gamma}  \,  ds \int_{\R^d}\varphi\big(|y|^d\big)\,  dy 
		=\frac{\beta}{1-\gamma},								  
		\end{align*} 
		using the normalisation of $\varphi$. The claimed independence follows as the distribution does not depend on~$u$, completing the proof of~(c).
		\smallskip
		
		Applying (b) to $\x=(0,u)$ and $t=\infty$ gives that the number of younger neighbours up to time~$s$ is Poisson distributed with parameter
		\begin{align*}
		\lambda_{\setZ_{\x}^\infty(s)}\big(\R^d\times (u,s]\big) &= \int_u^s \int_{\mathbb{R}^d} \varphi\big(\beta^{-1} v^{1-\gamma}u^\gamma |y|^d\big) \, 
		dy \, dv
		 = \beta\int_u^s  v^{\gamma-1}u^{-\gamma}\int_{\mathbb{R}^d} \varphi (|y|^d)\, dy \, dv \\
		& = \beta\int_u^s v^{\gamma-1}u^{-\gamma} \, dv  = \beta\frac{s^\gamma u^{-\gamma} - 1}{\gamma}.
		\end{align*}
		As $U$ is independent of $\X$ and $\V$ the probability that the indegree equals $k$ is therefore
		\begin{align*}
		\int_0^1 \exp\Big(-\beta\frac{u^{-\gamma}-1}{\gamma}\Big)\cdot\frac{\big(\beta\frac{u^{-\gamma}-1}{\gamma}\big)^k}{k!} \, du 
		& = \int_0^\infty \exp(-\lambda)\cdot\frac{\lambda^k}{k!}\cdot\left(\beta^{1/\gamma}(\gamma \lambda+\beta)^{-(1+1/\gamma)}\right)\, d\lambda,
		\end{align*}
		as claimed.
\end{proof}



\begin{figure}[!h]
	\begin{center}
		\includegraphics[scale=0.3]{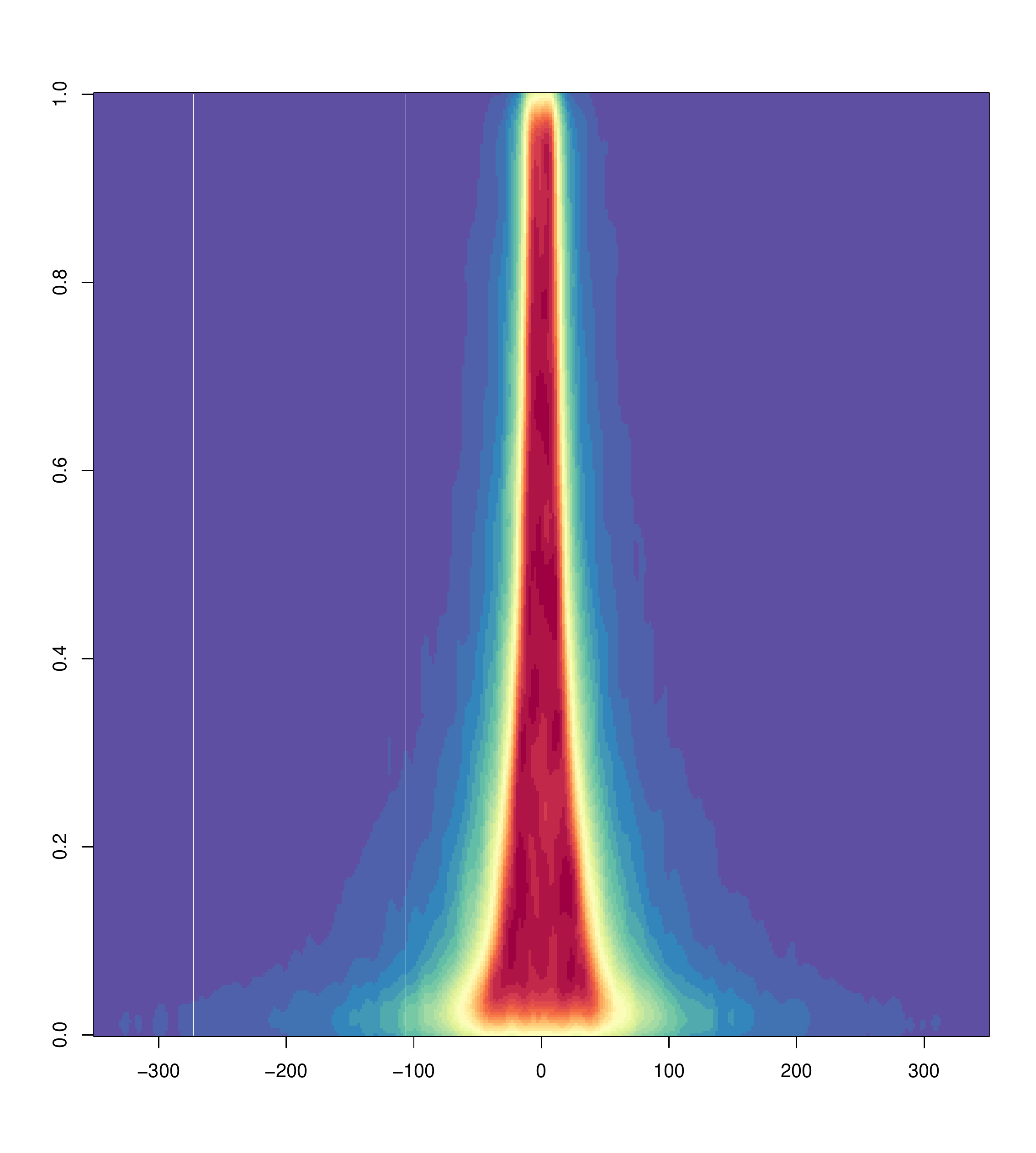}
		\includegraphics[scale=0.3]{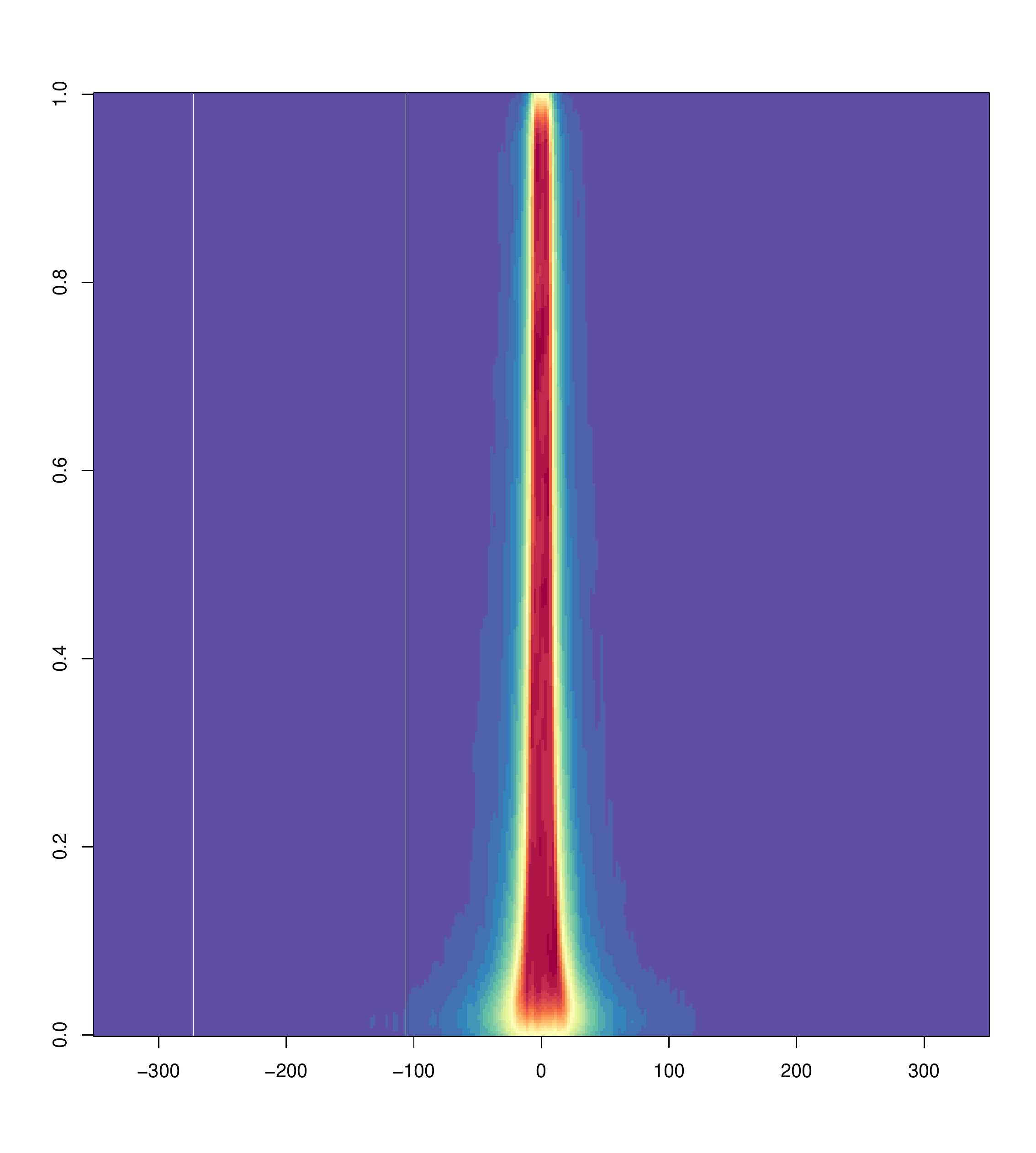}
		\caption{Heatmaps of the neighbourhood of a realtively old root (left, birth time 0.2) and of a relatively young root (right, birth time 0.8) in $G^\infty_0$ with $\beta=5$, $\gamma=1/3$ and $\varphi(x)=1\wedge x^{-2}$. 
		}
		\label{Figure:Neighbourhood}
	\end{center}
\end{figure}

{\bf Remark:}
Since, by construction, $\Y_\x^t$ and $\setZ_\x^t(1)$ are independent Poisson point processes, the neighbourhood of a point $\x=(0,u)$ added (if necessary) to $G^t$ is a Poisson point process with intensity $\lambda_{\setZ_\x^t(1)}+\lambda_{\Y_\x^t}$. Let now $t$ be finite and pick a vertex $X$ uniformly at random from 
the finite graph~$G_t$. We easily see that $h_t(\theta_\x G_t)= G^t_0$ in distribution. Hence Proposition~\ref{Lemma:InOutprocess} part (a) and (b) give a precise description of the neighbourhood of a randomly chosen vertex in $G_t$. 

\begin{proof}[Proof of Theorem~\ref{PropADRCM}(i)]
By Proposition~\ref{Lemma:InOutprocess} part (c) and (d), almost surely, the origin has finite degree in $G^\infty_0$. Hence, by the refined Campbell theorem (see Theorem 9.1 in \cite{LastPenrose2017}), almost surely, every vertex in $G^\infty$ has finite degree.
\end{proof}

\begin{proof}[Proof of Theorem~\ref{PropADRCM}(ii)]
We work conditionally on $\x=(x,s)\in\X^\infty$. Our aim is to show that there exists an almost surely finite random variable $M$ such that, for all $t\in(0,\infty]$ and $\y\in\X^\infty$ with distance at least $M$ from $\x$, the vertices $\x$ and $\y$ are not connected in $G^t$. To this end, observe that the distance between $\x$ and any $\y\in\T_t^d$ can be up to $2\sqrt{d}\abs{x}$ smaller than it would be in $\R^d$. Consider the model where the vertices within distance $2\sqrt{d}\abs{x}$ of $\x$ are deleted from $\mathcal{X}^\infty$ and all the other vertices are moved towards $\x$ by a distance of $2\sqrt{d}\abs{x}$. It is easy to see that all vertices $\y\in\X^\infty$, that are at least $2\sqrt{d}\abs{x}$ away from $\x$ and connected to $\x$ in the finite graph $G^t$ for some $t>0$, are also linked to $\x$ in this new model. Furthermore, the degree of $\x$ is still almost surely finite. Hence, we define the random variable $M$ as the distance of $\x$ to the furthest vertex it is linked to in this new model, plus $2\sqrt{d}\abs{x}$. Then $M$ is almost surely finite and, as for $t>\abs{x}+M$ the vertices in $\X^\infty$ and in $\X^t$ within distance $M$ from $\x$ coincide, the edges of $\x$ linking it to another vertex $\y$ that is at most $M$ away coincide in $G^t$ and $G^\infty$ for sufficiently large $t$. 
\end{proof}

\begin{proof}[Proof of Theorem~\ref{PropADRCM}(iii)]
	We can replace the left-hand side in \eqref{LLN} by the limit of
	$\frac 1t \sum_{\x \in G^t} h( \theta_\x G^t)$, which has the same distribution and, due to Campbell's formula, has expectation
	$\E h ( G_0^t)$. Furthermore, the neighbourhoods of the origin in $G_0^t$ and in $G_0^\infty$ agree for sufficiently large $t$. 
	As the family $(h(G_0^t))_{t>0}$ is bounded in $L^p$ and therefore uniformly integrable, we infer that $\E h ( G_0^t)$ converges to 
	$\E h ( G_0^\infty)$. Hence the first moments in \eqref{LLN} converge, and we now argue that for bounded~$h$  the second moments converge, too.\smallskip

Spelling out the second moment of $\frac 1t \sum_{\x \in G^t} h( \theta_\x G^t)$ we get a term
corresponding to choosing the same $\x\in G^t$ twice, which by the first moment calculation applied to $h^2$ converges to zero, and the term
$$\E\Big[ \frac 1{t^2} \sum_{\heap{\x,\x' \in G^t}{\x\not=\x'}} h( \theta_\x G^t)h( \theta_{\x'} G^t)\Big].$$
Using the boundedness of $h$ we can chose $\eps>0$ so that the contribution from pairs $\x, \x'$ for which one is born before time $\eps$ is arbitrarily small. We can then find a large radius $R$ so that the graph neighbourhood of the origin on which $h$ depends is contained in $\{\y \colon d(0,y)\leq R\}$ for $\theta_\x G^t$ for a proportion of vertices $\x\in G^t$ born after time $\eps$ arbitrarily close to one,
for all sufficiently large~$t$. 
We can neglect the small proportion of exceptional vertices as well as pairs $\x, \x'$ with distance smaller than $R$ using again the boundedness of $h$. On the remaining part the expectation factorizes and we see that second moment converges to $(\E h ( G_0^\infty))^2$. Hence we get convergence in $L^2$.
\smallskip

It remains to remove the condition of boundedness of $h$. Let $k\in\N$ and observe that our result applies to the bounded functional $h\wedge k$. Note that
$$\E \Big[ \frac 1t \sum_{\x \in G^t} h( \theta_\x G^t)-h\wedge k ( \theta_\x G^t) \Big]
= \E \big[ h(G^t_0)-h\wedge k(G^t_0)\big]$$ 
and the right hand side goes to zero uniformly in $t$ as $k\to\infty$ by the uniform integrability implied in our $L^p$ bound. This implies the required convergence. 	
\end{proof}

We define the \textit{empirical outdegree distribution} $\nu_t$ of the graph $G_t$ by
	\begin{equation*}
	\nu_t(k) =
		\frac{1}{t}\sum_{\mathbf{x}\in G_t}\mathbbm{1}_{\{|\Y_\mathbf{x}(G_t)|=k\}}
\qquad  \mbox{ for } k\in\N,
\end{equation*}
and note that (for convenience) we have normalised $\nu_t$ so that its mass converges to one without 
necessarily being equal to one for small $t$.
We now show that the empirical outdegree distribution $\nu_t$ converges to a deterministic limit.
\pagebreak[3]

\begin{theorem} \label{Theorem:OutdegreeDistribution}
	For any function $g:\mathbb{N}_0\rightarrow[0,\infty)$ growing no faster than exponentially 
we have
	\begin{equation*}
		\frac{1}{t}\sum_{\mathbf{x}\in G_t} g\big(|\Y_\mathbf{x}(G_t)|\big)  = \int g \, d\nu_t 
                \longrightarrow\int g \, d\nu,
	\end{equation*}	 
	in probability, as $t\rightarrow\infty$, where $\nu$ is the Poisson distribution with parameter $\beta/(1-\gamma)$. 
\end{theorem}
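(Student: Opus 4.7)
The plan is to derive the theorem as a direct corollary of Theorem~\ref{PropADRCM}(iii), applied to the functional $h$ that reads off the outdegree of the root.

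\textbf{Step 1 (Setting up the functional).} Define, for any rooted locally finite graph $G$ with root $\mathbf{r}$ in $\T_t^d\times(0,1]$, the functional $h(G) := g(|\Y_{\mathbf{r}}(G)|)$, where $\Y_{\mathbf{r}}(G)$ denotes the older graph-neighbours of the root. Since the shift $\theta_\x$ only translates positions and preserves ages and edges, $h(\theta_\x G_t) = g(|\Y_\x(G_t)|)$, so the left-hand side in the statement equals $\frac 1t \sum_{\x \in G_t} h(\theta_\x G_t)$. Moreover, $h$ depends only on the $1$-neighbourhood of the root in $G$, so it is a bounded graph-neighbourhood functional as required by Theorem~\ref{PropADRCM}(iii).

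\textbf{Step 2 (Uniform $L^p$-bound).} I must verify $h\in\mathcal{H}_p$ for some $p>1$, i.e.\ $\sup_{0<t<\infty}\E h(G_0^t)^p<\infty$. By Proposition~\ref{Lemma:InOutprocess}(a) applied to the root $(0,U)$ with $U\sim\mathrm{Unif}(0,1]$, the outdegree $|\Y_{\mathbf{r}}(G_0^t)|$ is, conditionally on $U=u$, Poisson distributed with parameter
\[
\lambda_{t,u}=\int_0^u\int_{\T_t^d}\varphi\bigl(\beta^{-1}u^{1-\gamma}s^\gamma d(0,y)^d\bigr)\,dy\,ds\le \int_0^u\int_{\R^d}\varphi\bigl(\beta^{-1}u^{1-\gamma}s^\gamma |y|^d\bigr)\,dy\,ds=\frac{\beta}{1-\gamma},
\]
by the computation in the proof of Proposition~\ref{Lemma:InOutprocess}(c). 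Thus $|\Y_{\mathbf{r}}(G_0^t)|$ is stochastically dominated, uniformly in $t$ and $U$, by a Poisson random variable $N$ with parameter $\beta/(1-\gamma)$. Since $g$ grows at most exponentially, there exist constants $A,B>0$ with $g(k)\le A e^{Bk}$, and picking any $p>1$ we get $\E h(G_0^t)^p\le A^p\E e^{Bp N}<\infty$ because Poisson variables have moment generating functions finite everywhere. Hence $h\in\mathcal H_p$.

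\textbf{Step 3 (Identifying the limit).} Theorem~\ref{PropADRCM}(iii) now yields
\[
\frac{1}{t}\sum_{\x \in G_t} g\bigl(|\Y_\x(G_t)|\bigr) \;\longrightarrow\; \E\bigl[g(|\Y_{\mathbf{r}}(G_0^\infty)|)\bigr]\qquad\text{in probability.}
\]
Proposition~\ref{Lemma:InOutprocess}(c) identifies $|\Y_{\mathbf{r}}(G_0^\infty)|$ as Poisson distributed with parameter $\beta/(1-\gamma)$, independent of $U$, so the right-hand side equals $\int g\,d\nu$, as claimed.

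The only substantive obstacle is Step~2, the uniform integrability of $g(|\Y_{\mathbf{r}}(G_0^t)|)^p$, but stochastic domination by a fixed Poisson variable with parameter $\beta/(1-\gamma)$ takes care of this at once; everything else is a direct application of results already proved.
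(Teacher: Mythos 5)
Your proposal is correct and follows essentially the same route as the paper: define $h(G)=g(|\Y_{\x}(G)|)$, verify $h\in\mathcal H_p$ via the uniformly bounded Poisson parameter for the outdegree of the root in $G_0^t$, and apply Theorem~\ref{PropADRCM}(iii) together with Proposition~\ref{Lemma:InOutprocess}(c) to identify the limit. The only difference is that you spell out the domination of the parameter by $\beta/(1-\gamma)$ and the resulting $L^p$-bound explicitly, which the paper states more briefly.
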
	
\begin{proof}
For a finite graph $G$ with vertices marked by birth times and a root vertex $\x$ 
we can define $h(G)=g(|\Y_{\x}(G)|)$ where $\Y_{\x}(G)$ is the set of
edges from the root to older vertices in~$G$. Note that the function $h$ depends only on the neighbourhood  of the root within graph distance one and the relative birth times of these vertices. Moreover, $h(G_0^t)=g(|\Y_{\x}(G_0^t)|)$ where $\x\in G_0^t$ is the vertex placed at the origin, for arbitrary~$t$, and  as $|\Y_{\x}(G_0^t)|$ are Poisson distributed with a bounded parameter, the integrability condition $h\in\mathcal H_p$ is satisfied as long as $g$ is not growing faster than exponentially. As $h(\theta_\x G_t)=g(|\Y_{\x}(G_t)|)$ for all $\x\in\X_t$ and finite~$t$, 
we infer the result  from Theorem~\ref{PropADRCM}(iii).
\end{proof} 

Define the \textit{empirical indegree distribution} $\mu_t$ of the graph $G_t$ by
\begin{equation*}
	\mu_t(k)= 
		\frac{1}{t}\sum\limits_{\mathbf{x}\in G_t}\mathbbm{1}_{\{ |{\mathcal Z}_\mathbf{x}(t, G_t)|=k\}}.
\end{equation*} 
Similar to above, the empirical indegree distribution $\mu_t$ also converges to a deterministic limit.

\begin{theorem} \label{Theorem:IndegreeDistribution}
For any  function $g:\mathbb{N}_0\rightarrow[0,\infty)$ growing no faster than linearly we have 
\begin{equation*}
	\frac{1}{t}\sum\limits_{\mathbf{x}\in G_t} g\big(|{\mathcal Z}_\mathbf{x}(t, G_t)|\big)
= \int g \, d\mu_t \longrightarrow\int g \, d\mu,
\end{equation*}
in probability, as $t\rightarrow\infty$, where $\mu$ is the mixed Poisson distribution with density $f$ 
as in~\eqref{DensityLambda}
\end{theorem}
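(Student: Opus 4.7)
The plan is to deduce this from Theorem~\ref{PropADRCM}(iii), mirroring the derivation of Theorem~\ref{Theorem:OutdegreeDistribution} but with a more delicate verification of the $L^p$ hypothesis. For a locally finite rooted graph $G$ with root $\x$ and vertices carrying birth times, set
\[h(G):=g\bigl(|\mathcal Z_{\x}(G)|\bigr),\]
so that $h(\theta_\x G_t)=g(|\mathcal Z_\x(t,G_t)|)$ for every $\x\in\X_t$, and $h$ depends only on the one-neighbourhood of the root together with the relative birth times of those neighbours. The left-hand side of the claim is then $\frac1t\sum_{\x\in G_t}h(\theta_\x G_t)$. The limit is identified via Proposition~\ref{Lemma:InOutprocess}(b) and~(d): the indegree of the root $(0,U)$ of $G_0^\infty$, with $U$ uniform on $(0,1]$, is mixed Poisson with mixing density $f$ from \eqref{DensityLambda}, so its law is $\mu$ and therefore $\E h(G_0^\infty)=\int g\,d\mu$.

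The substantive work is to verify $h\in\mathcal H_p$ for some $p>1$. Since $\gamma\in(0,1)$, pick $p\in(1,1/\gamma)$. Writing $N_t$ for the indegree of the root in $G_0^t$, Proposition~\ref{Lemma:InOutprocess}(b) says that, conditionally on $U$, $N_t$ is Poisson with parameter
\[\Lambda_t(U)=\int_U^1\int_{\T_t^d}\varphi\bigl(\beta^{-1}v^{1-\gamma}U^\gamma d(0,y)^d\bigr)\,dy\,dv.\]
Parametrising $\T_t^d$ by the centred cube of side $t^{1/d}$, on which the torus metric agrees with the Euclidean one, and using the normalisation $\int_{\R^d}\varphi(|y|^d)\,dy=1$, one obtains $\int_{\T_t^d}\varphi(c\,d(0,y)^d)\,dy\le 1/c$ for every $c>0$, whence
\[\Lambda_t(U)\;\le\;\Lambda_\infty(U)\;=\;\tfrac{\beta}{\gamma}\bigl(U^{-\gamma}-1\bigr)\qquad\text{uniformly in }t.\]
Standard moment estimates for Poisson variables then give $\E[N_t^p\mid U]\le C_p(1+\Lambda_\infty(U)^p)$, and the linear growth bound $g(k)\le C(1+k)$ yields
\[\E\bigl[h(G_0^t)^p\bigr]\;\le\;C'\Bigl(1+\int_0^1\Lambda_\infty(u)^p\,du\Bigr).\]
The last integral is finite precisely because $p\gamma<1$, so $h\in\mathcal H_p$, and Theorem~\ref{PropADRCM}(iii) delivers the asserted convergence in probability.

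The main obstacle is this moment bound: the heavy tail of the mixing density $f(\lambda)\asymp\lambda^{-(1+1/\gamma)}$ restricts admissible exponents to $p<1/\gamma$. The same mechanism forces the restriction on $g$, in contrast to Theorem~\ref{Theorem:OutdegreeDistribution} where the outdegree is a genuine Poisson variable with constant parameter $\beta/(1-\gamma)$ and hence admits exponential moments; in the indegree case the power-law of $\mu$ is exactly the scale-free tail advertised in Section~\ref{Section:Motivation}, and it sets the ceiling on the admissible growth of~$g$.
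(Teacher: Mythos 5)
Your proposal is correct and follows essentially the same route as the paper: define $h(G)=g(|\mathcal Z_{\x}(G)|)$, check that $h\in\mathcal H_p$ for some $p>1$, and invoke Theorem~\ref{PropADRCM}(iii) together with Proposition~\ref{Lemma:InOutprocess} to identify the limit as $\int g\,d\mu$. The only (harmless) difference is in how the uniform $L^p$ bound is verified: the paper dominates the indegree in $G_0^t$ by that in $G_0^\infty$ and cites the power-law tail of $\mu$ from Lemma~\ref{Lemma:PowerLaw}, whereas you bound the conditional Poisson parameter uniformly in $t$ by $\tfrac{\beta}{\gamma}(U^{-\gamma}-1)$ and integrate, which amounts to the same moment condition $p<1/\gamma$.
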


\begin{proof}
For a finite graph $G$ with vertices marked by birth times and a root vertex $\x$ 
we can define $h(G)=g(|{\mathcal Z}_{\x}(G)|)$ where ${\mathcal Z}_{\x}(G)$ is the set of
edges from younger vertices in~$G$ to the root. Note that the function $h$ depends only on the neighbourhood  of the root within graph distance one and the relative birth times of these vertices. Moreover, $h(G_0^t)=g(|{\mathcal Z}_\x(G_0^t)|)$ where $\x\in G_0^t$ is the vertex placed at the origin, for arbitrary~$t$. Now $|{\mathcal Z}_{\x}(G_0^t)|$ is dominated by $|{\mathcal Z}_{\x}(G_0^\infty)|$
whose distribution $\mu$ has tails (calculated in Lemma~\ref{Lemma:PowerLaw} below) that vanish fast enough to ensure that  $h\in\mathcal H_p$ for some $p>1$. As $h(\theta_\x G_t)=g(|{\mathcal Z}_{\x}(G_t)|)$ for all $\x\in\X_t$ and finite~$t$ we infer the result  from Theorem~\ref{PropADRCM}(iii).
\end{proof}

To complete the proof that the age-based preferential attachment model  is scale-free with power law 
exponent $\tau=1+1/\gamma$ we observe that, by a similar argument as in Theorem~\ref{Theorem:OutdegreeDistribution} and Theorem~\ref{Theorem:IndegreeDistribution}, the empirical degree distribution in $G_t$ converges in probability to the convolution of $\nu$ and $\mu$. As $\nu$ has superexponentially light tails, the tail
behaviour of the convolution is inherited from that of $\mu$, which we now calculate. 

\begin{lemma}\label{Lemma:PowerLaw}
$\mu(k)=k^{-(1+\frac{1}{\gamma})+o(1)}$ as $k\uparrow\infty$.
\end{lemma}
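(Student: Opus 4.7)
The plan is to work directly from the integral representation
\begin{equation*}
\mu(k)=\int_0^\infty e^{-\lambda}\,\frac{\lambda^k}{k!}\,f(\lambda)\,\mathd\lambda,
\end{equation*}
valid because, by Proposition~\ref{Lemma:InOutprocess}(d), $\mu$ is the mixed Poisson law with mixing density $f(\lambda)=\beta^{1/\gamma}(\gamma\lambda+\beta)^{-(1+1/\gamma)}$. The idea is to transfer the polynomial tail of $f$, namely the two-sided estimate $f(\lambda)\asymp (1+\lambda)^{-(1+1/\gamma)}$ on $(0,\infty)$, to the same power-law exponent for the discrete law~$\mu$.

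First I would fix a small $\eps\in(0,\tfrac12)$ and split the integral at $\eps k$ and $k/\eps$. On the central region $[\eps k,k/\eps]$ the mixing density satisfies $f(\lambda)\asymp k^{-(1+1/\gamma)}$ with constants depending only on $\eps$, while $\lambda\mapsto e^{-\lambda}\lambda^k/k!$ is the density of a $\mathrm{Gamma}(k+1,1)$ law, hence integrates to one on $(0,\infty)$ and, by Chebyshev's inequality, concentrates its mass on $[\eps k,k/\eps]$ as $k\to\infty$. Therefore the central piece contributes an amount of order exactly $k^{-(1+1/\gamma)}$, matching both the upper and the lower bound we seek.

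For the two outer pieces I would invoke Stirling's formula: the maximum of $\lambda\mapsto e^{-\lambda}\lambda^k/k!$ is of order $k^{-1/2}$ near $\lambda=k$, and for $\lambda\notin[\eps k,k/\eps]$ one has $e^{-\lambda}\lambda^k/k!\leq e^{-c_\eps k}$ for some $c_\eps>0$. Multiplied by the at-most polynomial factor $f$ and integrated, both outer contributions are super-polynomially small in~$k$ and are absorbed into the $o(1)$ in the exponent. Combining the three pieces gives $\log\mu(k)=-(1+1/\gamma)\log k+O(1)$, which is precisely the claim.

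I expect no deep obstacle: the only point requiring any care is the bound on $[k/\eps,\infty)$, where one must argue that the slow polynomial decay of $f$ does not spoil the exponential decay of the Gamma density. This is immediate because $\lambda^{-(1+1/\gamma)}e^{-\lambda/2}$ is integrable at infinity, so Stirling leaves a comfortable exponential margin. Since we only need the exponent, not a sharp constant of the form $\mu(k)\sim Ck^{-(1+1/\gamma)}$, the bookkeeping is minimal and letting $\eps\to 0$ at the end is not required.
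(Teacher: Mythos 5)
Your argument is correct, but it takes a genuinely different route from the paper. The paper exploits the exact algebraic form of the mixing density: for the upper bound it replaces $(\gamma\lambda+\beta)^{-(1+1/\gamma)}$ by $(\gamma\lambda)^{-(1+1/\gamma)}$, for the lower bound it uses $\gamma\lambda+\beta\le c\gamma\lambda$ on $[A,\infty)$, and in both cases the integral collapses to an exact Gamma integral, so that $\mu(k)\asymp\Gamma(k-\tfrac1\gamma)/\Gamma(k+1)$ and a single application of Stirling's formula finishes the proof. You instead run a Laplace-type localization: you observe that $e^{-\lambda}\lambda^k/k!$ is the $\mathrm{Gamma}(k+1,1)$ density, concentrate it on $[\eps k,k/\eps]$ by Chebyshev, freeze $f(\lambda)\asymp_\eps k^{-(1+1/\gamma)}$ on that window, and kill the two tail pieces by exponential bounds, reserving an $e^{-\lambda/2}$ factor on $[k/\eps,\infty)$ for integrability. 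One small quantitative caveat there: with that particular split the exponential margin $\tfrac1{2\eps}+\log\eps-1$ is only positive for $\eps$ sufficiently small (roughly $\eps\lesssim 0.18$), not for all $\eps<\tfrac12$; since you only need one fixed small $\eps$ this is harmless, and a Chernoff bound for the Gamma tail would avoid the issue entirely. What each approach buys: yours is more robust, transferring any regularly varying tail of a mixing density to the mixed Poisson law without using the specific form of $f$, while the paper's computation is shorter and reduces both bounds to the same explicit Gamma-ratio. Both in fact yield the stronger two-sided estimate $\mu(k)\asymp k^{-(1+1/\gamma)}$, which implies the claimed $\mu(k)=k^{-(1+\frac1\gamma)+o(1)}$.
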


\begin{proof}

Observe that
	\begin{align*}
		\mu(k) & = \beta^{1/\gamma}\int_0^\infty \frac{\lambda^k}{k!}e^{-\lambda}(\gamma\lambda+\beta)^{-(1+\frac{1}{\gamma})} \, d\lambda 
\leq \frac{\beta^{1/\gamma}\gamma^{-1-\frac{1}{\gamma}}}{\Gamma(k+1)}\int_0^\infty \lambda^{(k-\frac{1}{\gamma})-1}e^{-\lambda} \, d\lambda\\ &  =\frac{\beta^{1/\gamma}}{\gamma^{1+1/\gamma}}\, \frac{\Gamma(k-\frac{1}{\gamma})}{\Gamma(k+1)} 
= k^{-1-\frac{1}{\gamma}+o(1)},
	\end{align*}
as $k\uparrow\infty$, by Stirling's formula. On the other hand, note that for some fixed bound $A> 0$, there exists a constant $c>0$ such that $\gamma x+\beta\leq c\gamma x$ for all $x\geq A$. Hence
	\begin{align*}
		\mu(k)&\geq \frac{c^{-1-\frac{1}{\gamma}}\beta^{1/\gamma}}{\Gamma(k+1)}\int_A^\infty \lambda^k e^{-\lambda}(\gamma\lambda)^{-1-\frac{1}{\gamma}}\, d\lambda  
= \tilde{c}\,\frac{\Gamma(k-\frac{1}{\gamma})}{\Gamma(k+1)} - \frac{\tilde{c}}{\Gamma(k+1)} \int_0^A \lambda^{(k-\frac{1}{\gamma})-1} e^{-\lambda}\, d\lambda, 
	\end{align*}
for some positive constant $\tilde{c}$. As the subtracted term is of smaller order we obtain the
lower bound.
%
\end{proof}

\section{Global and local clustering coefficients} \label{Section:Clustering}

To show that the age-based spatial preferential attachment model has clustering features we use two metrics well established in the applied networks literature, see e.g.~\cite{watts98, Newman01} for some early papers. If $G$ is a finite graph, we call a pair of edges
in $G$ a \emph{wedge} if they share an endpoint (called its \emph{tip}). Define the \textit{global clustering coefficient} or \emph{transitivity} of $G$ as
\begin{align*}
	c^{\text{glob}}(G) := 3 \frac{\text{Number of triangles in } G}{\text{Number of wedges in } G},
\end{align*}
if there is at least one wedge in $G$ and $c^{\text{glob}}(G):=0$ otherwise. By definition, $c^{\text{glob}}(G)\in[0,1]$. \smallskip

Another way of thinking about clusters is locally; i.e. to count only the triangles and wedges containing a fixed vertex $\mathbf{x}$. For a vertex $\x$ with at least two neighbours, define the \textit{local clustering coefficient} by
\begin{align*}
	c_\x^{\text{loc}}(G):=\frac{\text{Number of triangles in } G \text{ containing vertex } \x}{\text{Number of wedges with tip } \x \text{ in } G},
\end{align*}
which is also an element of $[0,1]$. Let $V_2(G)\subseteq G$ be the set of vertices in $G$ with degree at least two, and define the \textit{average clustering coefficient} by
\begin{align*}
	c^\text{av}(G):=\frac{1}{\abs{V_2(G)}}\sum_{\x \in V_2(G)}c_\x^{\text{loc}}(G),
\end{align*}
if $V_2(G)$ is not empty and as $c^{\text{av}}(G):=0$ otherwise. Note that this metric places more weight on the low degree nodes, while the transitivity places more weight on the high degree nodes.
\smallskip

\begin{theorem}[Clustering Coefficients]\label{Theorem:Clustering} ~\
	\begin{enumerate}
		\item[(a)] For the average clustering coefficient we have
			\begin{align*}
				c^\text{av}(G_t)\longrightarrow \int_0^1 \P\big\{ (X_u^{\ssup 1},S_u^{\ssup 1})\overset{}{\leftrightarrow}(X_u^{\ssup 2},S_u^{\ssup 2})\big\} \, \pi(du),
			\end{align*}
			in probability as $t\rightarrow\infty$, where $(X_u^{\ssup 1},S_u^{\ssup 1})$ resp.~$(X_u^{\ssup 2}, S_u^{\ssup 2})$ are two independent random variables on $\R^d\times[0,1]$ with distribution
				\begin{align}
			\frac{1}{\lambda_u}  \Big(\varphi(\beta^{-1} s^{1-\gamma}u^\gamma{|x|}^d)\mathbbm{1}_{(u,1]}(s)+\varphi(\beta^{-1} u^{1-\gamma}s^{\gamma}{|x|}^d)\mathbbm{1}_{[0,u]}(s)\Big) \, dx \, ds, \label{clusteringLimitDesnity}
			\end{align}
			where \smash{$\lambda_u=\frac\beta\gamma(\frac{2\gamma-1}{1-\gamma}+u^{-\gamma})$} is the normalising factor, 
			and $\pi$~is the probability measure on $[0,1]$ with density proportional to $1-e^{-\lambda_u}-\lambda_u e^{-\lambda_u}$.\smallskip
		\item[(b)] For the global clustering coefficient, there exists a number 
$c_\infty^\text{glob}\geq 0$ such that
		\begin{align*}
			c^\text{glob}(G_t)\longrightarrow c_\infty^\text{glob}
		\end{align*}
		in probability, as $t\rightarrow\infty$. The limiting global clustering coefficient $c_\infty^\text{glob}$ is positive if and only if $\gamma<1/2$.
	\end{enumerate}
\end{theorem}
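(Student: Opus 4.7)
The plan is to recognise both clustering coefficients as ratios of empirical averages of rooted $2$-neighbourhood functionals, apply Theorem~\ref{PropADRCM}(iii) to each sum, and identify the limits using the Palm description of neighbourhoods given by Proposition~\ref{Lemma:InOutprocess}.

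For part~(a), I would set $h_1(G):=c_0^{\text{loc}}(G)\mathbbm{1}_{\{\deg(0)\geq 2\}}$ and $h_2(G):=\mathbbm{1}_{\{\deg(0)\geq 2\}}$, both bounded by $1$ and determined by a bounded graph neighbourhood of the root, hence in $\mathcal{H}_p$ for every $p>1$. Writing
\[
c^{\text{av}}(G_t)=\frac{\tfrac1t\sum_{\x\in G_t}h_1(\theta_\x G_t)}{\tfrac1t|V_2(G_t)|}
\]
and applying Theorem~\ref{PropADRCM}(iii) to numerator and denominator yields convergence in probability to $\E h_1(G_0^\infty)/\P(\deg(0;G_0^\infty)\geq 2)$. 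To identify this as the claimed integral I would condition on the age $U=u$ of the root: by Proposition~\ref{Lemma:InOutprocess}(a,b) its neighbours form a Poisson point process whose intensity is precisely the unnormalised expression in~\eqref{clusteringLimitDesnity}, of total mass $\lambda_u$. Given $n\geq 2$ neighbours, their positions and ages are i.i.d.\ with density $\nu_u/\lambda_u$ and the $\binom{n}{2}$ potential edges among them are present independently with the appropriate $\varphi$-probability, so by exchangeability
\[
\E\bigl[c_0^{\text{loc}}(G_0^\infty)\,\big|\,U=u,\,\deg\geq 2\bigr]=\P\bigl\{(X_u^{\ssup 1},S_u^{\ssup 1})\leftrightarrow(X_u^{\ssup 2},S_u^{\ssup 2})\bigr\}.
\]
Weighting by $\P(\text{Poisson}(\lambda_u)\geq 2)=1-e^{-\lambda_u}-\lambda_u e^{-\lambda_u}$ and normalising in $u$ gives the probability measure $\pi$ and the stated formula.

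For part~(b) observe that every wedge is counted once (at its tip) and every triangle three times, so
\[
c^{\text{glob}}(G_t)=\frac{\tfrac1t\sum_\x T(\x;G_t)}{\tfrac1t\sum_\x\binom{\deg(\x)}{2}}=:\frac{N(t)}{D(t)},
\]
both sums being empirical averages of $2$-neighbourhood functionals. Set $N_\infty:=\E T(0;G_0^\infty)$ and $D_\infty:=\E\binom{\deg(0;G_0^\infty)}{2}$. When $\gamma<1/2$, Lemma~\ref{Lemma:PowerLaw} yields $\E[\deg(0;G_0^\infty)^{2p}]<\infty$ for some $p>1$, so both $T$ and $\binom{\deg}{2}$ lie in $\mathcal{H}_p$, Theorem~\ref{PropADRCM}(iii) applies directly, and $c^{\text{glob}}(G_t)\to N_\infty/D_\infty\in(0,\infty)$ in probability (with $N_\infty>0$ because triangles occur with positive probability). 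When $\gamma\geq 1/2$ the indegree has infinite second moment, so $D_\infty=\infty$; applying Theorem~\ref{PropADRCM}(iii) to the bounded truncation $\binom{\deg}{2}\wedge M$ and letting $M\uparrow\infty$ by monotone convergence forces $D(t)\to\infty$ in probability.

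The main obstacle is the numerator when $\gamma\geq 1/2$, since the bound $T\leq\binom{\deg}{2}$ is no longer in $L^p$ for $p>1$. The key analytical step is to establish directly that $N_\infty<\infty$ for every $\gamma\in(0,1)$: conditioning on $U=u$ and applying the Mecke formula to the Poisson neighbour process expresses $\E[T(0;G_0^\infty)\mid U=u]$ as a triple integral of the three edge probabilities of the triangle; splitting by the ordering of the three ages and using $\int_{\R^d}\varphi(c|y|^d)\,dy=c^{-1}$ to integrate out two of the three spatial coordinates produces a bound of order $u^{-\gamma}|\log u|$, which is integrable on $(0,1]$. Once $N_\infty<\infty$ is in hand, the monotone inclusion $T(0;G_0^t)\leq T(0;G_0^\infty)\in L^1$ makes the family $(T(0;G_0^t))_{t>0}$ uniformly integrable, and truncating $T$ at level $M$ (so that $T\wedge M$ is bounded and Theorem~\ref{PropADRCM}(iii) applies) and then letting $M\uparrow\infty$ yields $N(t)\to N_\infty$ in probability. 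Combined with $D(t)\to\infty$ this gives $c_\infty^{\text{glob}}=0$ when $\gamma\geq 1/2$, completing the dichotomy.
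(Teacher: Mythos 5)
Your part (a) is correct and is essentially the paper's own argument: the same two bounded functionals $h_1,h_2$, Theorem~\ref{PropADRCM}(iii) applied to numerator and denominator, and the identification of the limit via Proposition~\ref{Lemma:InOutprocess} together with the fact that, conditioned on its size, the Poisson neighbourhood consists of i.i.d.\ points with the normalised intensity \eqref{clusteringLimitDesnity}.

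In part (b) you take a genuinely different route for the triangle count, and this is where there is a gap. The paper sidesteps the whole issue by counting each triangle at its \emph{youngest} vertex: that functional is bounded by $\vert\Y_\x(G_0^\infty)\vert^2$, i.e.\ by the square of the outdegree, which is Poisson with parameter $\tfrac{\beta}{1-\gamma}$, so it lies in $\mathcal{H}_p$ for every $p>1$ and every $\gamma\in(0,1)$, and Theorem~\ref{PropADRCM}(iii) applies directly to the numerator in both regimes. You instead count all triangles at the root, so your functional is only bounded by $\binom{\deg}{2}$, and for $\gamma\geq 1/2$ you must supply extra input. Your Mecke computation showing $\E[T(0;G_0^\infty)\mid U=u]\lesssim u^{-\gamma}(1+\vert\log u\vert)$, hence $N_\infty<\infty$ for all $\gamma$, is correct and is a legitimate (if more computational) substitute. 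The gap is in upgrading this to control of $\tfrac1t\sum_\x T(\x;G_t)$: Theorem~\ref{PropADRCM}(iii) requires $h\in\mathcal{H}_p$ for some $p>1$, i.e.\ a uniform-in-$t$ $L^p$ bound, and your replacement, the ``monotone inclusion $T(0;G_0^t)\leq T(0;G_0^\infty)$'', is false. Under the natural coupling the torus metric satisfies $d(x,y)\leq\vert x-y\vert$ and $\varphi$ is nonincreasing, so edges in $G_0^t$ are \emph{more} likely than the corresponding Euclidean edges; $G_0^t$ can contain triangles through the root that are absent from $G_0^\infty$ (this discrepancy is exactly why the paper needs a separate argument for Theorem~\ref{PropADRCM}(ii)). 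Hence the family $(T(0;G_0^t))_{t>0}$ is not dominated by a single integrable variable as claimed, and your truncation step does not yield $N(t)\to N_\infty$. The gap is repairable: for $\gamma\geq 1/2$ you do not need convergence of the numerator, only tightness, which follows from a uniform bound $\sup_t\E[T(0;G_0^t)]<\infty$ obtained by redoing your Mecke computation on $\mathbb{T}_t^d$ and dominating the torus integrals by their $\R^d$ counterparts (unfolding, using monotonicity of $\varphi$); combined with $D(t)\to\infty$ this gives $c^{\text{glob}}(G_t)\to 0$. But as written the domination step is wrong, and the cleaner fix is the paper's device of rooting each triangle at its youngest vertex.
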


\pagebreak[3]

{\bf Remark:} The limiting  average clustering coefficient can be interpreted as the probability that in $G^\infty_0$
two neighbours of the vertex at the origin are connected by an edge. The density of the birthtime of the vertex at the origin 
here is not uniform but given by the measure $\pi$, which is the conditional distribution of the birthtime of a vertex given that it has 
degree at least two. Observe that this coefficient is always positive.  By contrast the global clustering coefficient vanishes asymptotically  when preferential attachment to old nodes is strong (i.e.\ when $\gamma$ is large). In this case the collection of wedges is dominated by those with an untypically old tip. These vertices have small local clustering as they are endvertices to a  significant amount of long edges.
\medskip

%

\begin{proof} 
Let $G$ be a finite rooted graph and define the function $h(G)=c_\x^{\text{loc}}(G)$ if the root $\x$ has degree at least two, and $h(G)=0$ otherwise. 
As $h$ is bounded, we have $h\in\mathcal{H}_p$ for any $p>1$ and, by Theorem~\ref{PropADRCM}~(iii), we get
	\[\frac{1}{t} \sum_{\x\in G_t} h(\theta_\x G_t) \longrightarrow \E\left[h(G_0^\infty)\right]\]
in probability, as $t\rightarrow\infty$. To calculate the limit, observe that, for a vertex $\x$ with degree $k$, the number of wedges with tip $\x$ is $k(k-1)/2$.  It follows that
\begin{align*}
\E\left[h(G_0^\infty)\right] 
& = \int_0^1\sum_{k\geq 2} \E\bigg[\frac{2}{k(k-1)}\sum_{(x,s)\overset{}{\leftrightarrow}(0,u)}\sum_{\underset{v<s}{(y,v)\overset{}{\leftrightarrow}(0,u)}}\mathbbm{1}_{\set{(x,s)\overset{}{\leftrightarrow}(y,v)}} \1_{\{\vert\Y_{(0,u)}^\infty\vert+\vert\setZ_{(0,u)}^\infty(1)\vert = k \}} \bigg] \,  du.
\end{align*}
By Proposition~\ref{Lemma:InOutprocess}, the neighbourhood of the root $(0,u)$ is given by a Poisson point process with intensity measure 
$$\lambda_{{\mathcal Z}_{(0,u)}^\infty(1)}+\lambda_{ \Y_{(0,u)}^\infty}.$$ Conditioned on the number of neighbours, the 
neighbours of the root $(0,u)$ are independent and identically distributed by the normalized intensity measure of the neighbourhood given in \eqref{clusteringLimitDesnity}, see \cite[Proposition 3.8]{LastPenrose2017}. Therefore,
\begin{align*}
\E\left[h(G_0^\infty)\right] &  = \int_0^1 \P\big\{ (X_u^{\ssup 1},S_u^{\ssup 1})\overset{}{\leftrightarrow}(X_u^{\ssup 2},S_u^{\ssup 2})\big\} \,  
\P \big\{\vert\Y_{(0,u)}^\infty\vert+\vert\setZ_{(0,u)}^\infty(1)\vert \geq 2 \big\} \, 
du,\end{align*}
where $(X_u^{\ssup 1},S_u^{\ssup 1})$ and $(X_u^{\ssup 2}, S_u^{\ssup 2})$ are independent and identically distributed as claimed. Choosing
$h(G)$ as the indicator of the event that the root has degree at least two, Theorem~\ref{PropADRCM}~(iii) gives
$$\frac{\vert V_2(G_t)\vert}{t} \longrightarrow \int_0^1 \P \big\{\vert\Y_{(0,u)}^\infty\vert+\vert\setZ_{(0,u)}^\infty(1)\vert\geq 2 \big\} \, du,$$
in probability. As $\vert\Y_{(0,u)}^\infty\vert+\vert\setZ_{(0,u)}^\infty(1)\vert$ is Poisson distributed with intensity $\lambda_u$ we conclude that
\[c^{\text{av}}(G_t) \longrightarrow 
\frac{\int_0^1 \P\big\{ (X_u^{\ssup 1},S_u^{\ssup 1})\overset{}{\leftrightarrow}(X_u^{\ssup 2},S_u^{\ssup 2})\big\} \,  
\big(1-e^{-\lambda_u}-\lambda_u e^{-\lambda_u}\big) \, du}{\int_0^1 1-e^{-\lambda_u}-\lambda_u e^{-\lambda_u} \, du},\]
as claimed in part~(a).\smallskip

For the global clustering coefficient, we count the number of triangles and wedges separately. To this end, define 
$h(G)$ to be the number of triangles which have their youngest vertex in the root of $G$, and $\hat{h}(G)$ to be the number 
of wedges with tip in the root $\x$ of $G$. Note that $h(G_0^t)\leq \vert\Y_\x(G_0^\infty)\vert^2$ and thus $h\in\mathcal{H}_p$ for any $p>1$. 
Moreover,
	\[\hat{h}(G_0^t)=\frac{1}{2}\vert\Y_\x^t\vert(\vert\Y_\x^t\vert-1)+\frac{1}{2}\vert\setZ_\x^t(1)\vert(\vert\setZ_\x^t(1)\vert-1)+\vert\Y_\x^t\vert\vert\setZ_\x^t(1)\vert \leq 2 \big( \vert\Y_\x^\infty\vert^2 + \vert\setZ_\x^\infty(1)\vert^2\big) .\]
If $\gamma<1/2$ and $1<p<1/(2\gamma)$, we hence have $\hat{h}\in\mathcal{H}_p$ and Theorem~\ref{PropADRCM}(iii) gives that
	\[ c^\text{glob}(G_t)=\frac{\sum_{\x\in G_t}h(\theta_\x G_t)}{t}\cdot\frac{t}{\sum_{\x\in G_t}\hat{h}(\theta_\x G_t)}\longrightarrow\frac{\E[h(G_0^\infty)]}{\E[\hat{h}(G_0^\infty)]}>0\]
in probability. If $\gamma> 1/2$, applying the theorem to the bounded functions $\hat{h}(G_t)\wedge k$ and then sending $k$ to $\infty$, we get \smash{$\frac{1}{t}\sum_{\x\in G_t}\hat{h}(\theta_\x G_t)\rightarrow\infty$} and hence $c^\text{glob}(G_t)\rightarrow 0$ in probability, as $t\rightarrow\infty$.
\end{proof}
\medskip

\begin{figure}[h]
\begin{center}
\includegraphics[width = 0.49\linewidth]{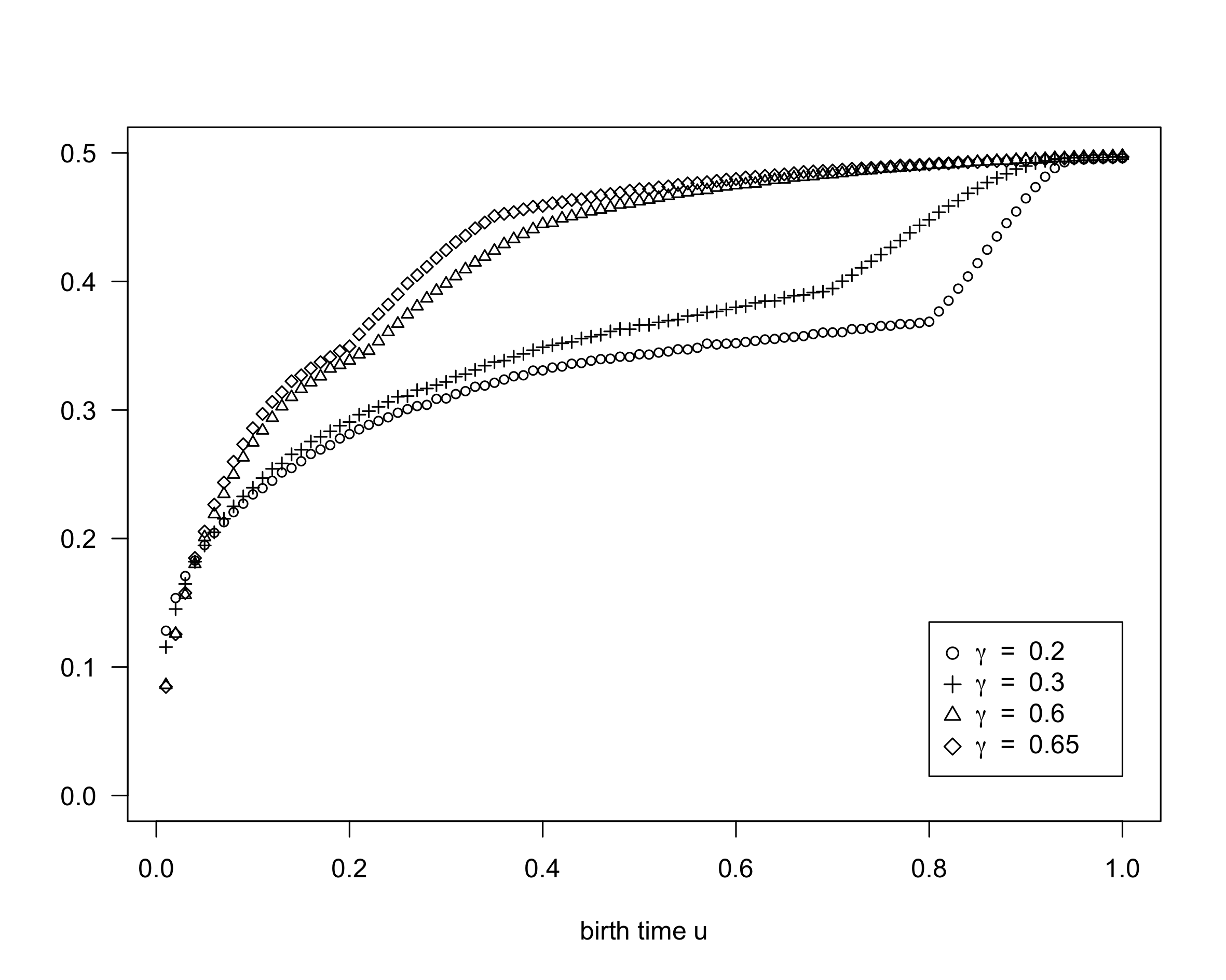}
\includegraphics[width = 0.49\linewidth]{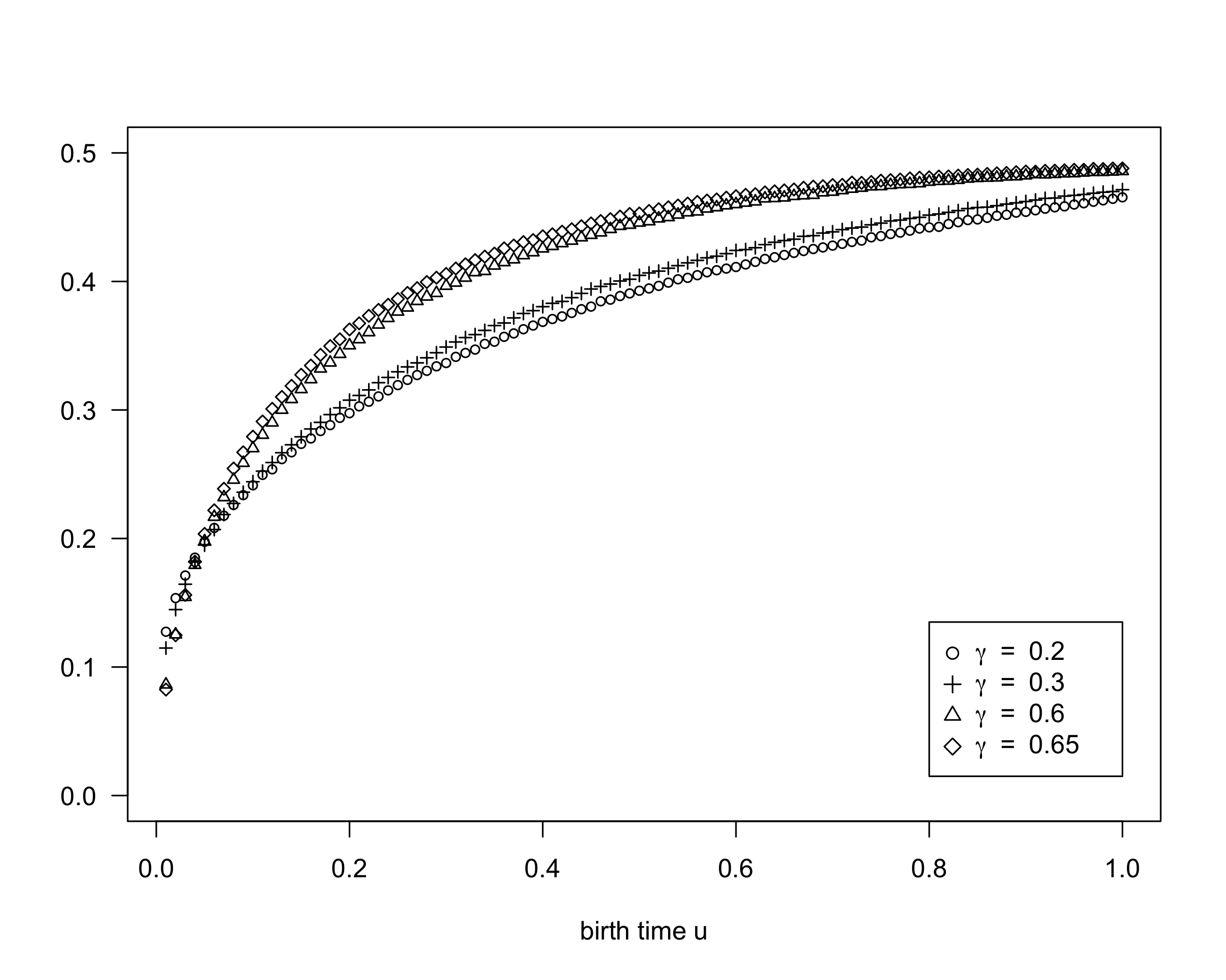}
\caption{Local clustering coefficient of a vertex $(0,u)$ for parameters $a = 1$ and $\beta = c_{\textrm{ed}}(1-\gamma)$ chosen such that the asymptotic edge density is fixed at $c_{\textrm{ed}}$. The left plot corresponds to the case with low edge density ($c_{\textrm{ed}} = 0.1$), while the right plot corresponds to high edge density ($c_{\textrm{ed}} = 10$). The graph on the left clearly shows the crossover in the parameter $u$, depending whether the tip of the wedge is predominantly the youngest, middle or oldest of the three involved vertices.}
\label{Img:Localclustering_varyingu}
\end{center}
\end{figure}
The local and average clustering coefficients cannot be calculated explicitly, but can be simulated; see the appendix of this paper for a discussion on the simulation techniques used here.
 We focus on the profile functions $\varphi=\frac{1}{2a}\mathbbm{1}_{[0,a]}$, for $a\geq 1/2$, dimension $d=1$, and fixed edge density $\beta/(1-\gamma)$.  
Figure~\ref{Img:Localclustering_varyingu} shows the local clustering coefficient of a vertex of age $u$ in $G^\infty$ showing monotone dependence on the age, i.e.\ the empirical probability that two neighbours of a given vertex are connected to each other is larger for younger vertices. This coincides with our intuitive understanding of the local structure of the networks, in which a young vertex, typically, is connected to either very close or very old vertices such that two randomly chosen neighbours have a decent chance of being connected to each other as well. By contrast,  an old vertex typically has more long edges to younger vertices. Thus, two of its neighbours are typically further apart, which reduces the chance of them being each others neighbour. This monotonicity occurs independently of the choice of $\beta$, $\gamma$ and $a$. 
\medskip

\begin{figure}[!h]
\begin{center}
\includegraphics[width = 0.49\linewidth]{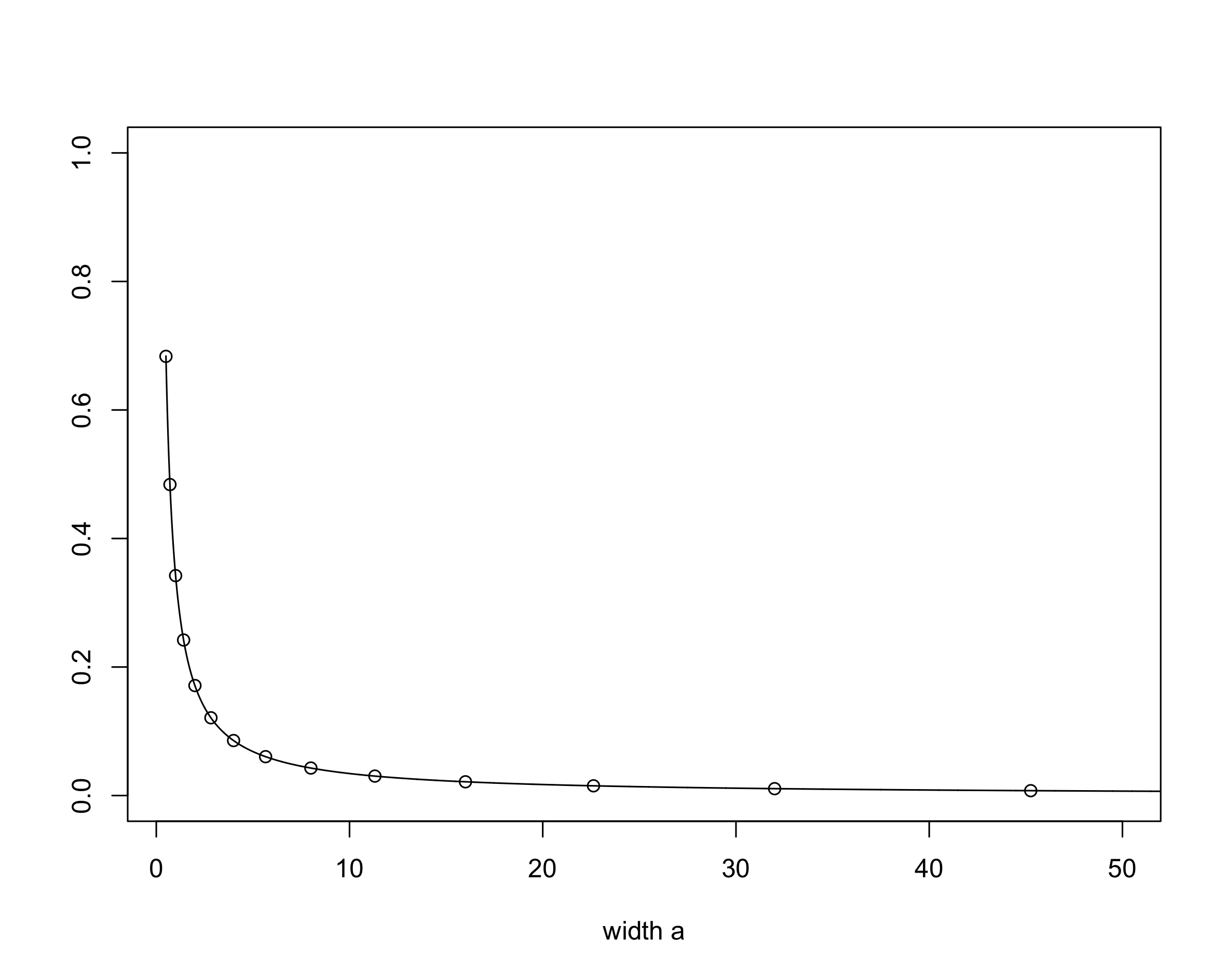}
\includegraphics[width = 0.49\linewidth]{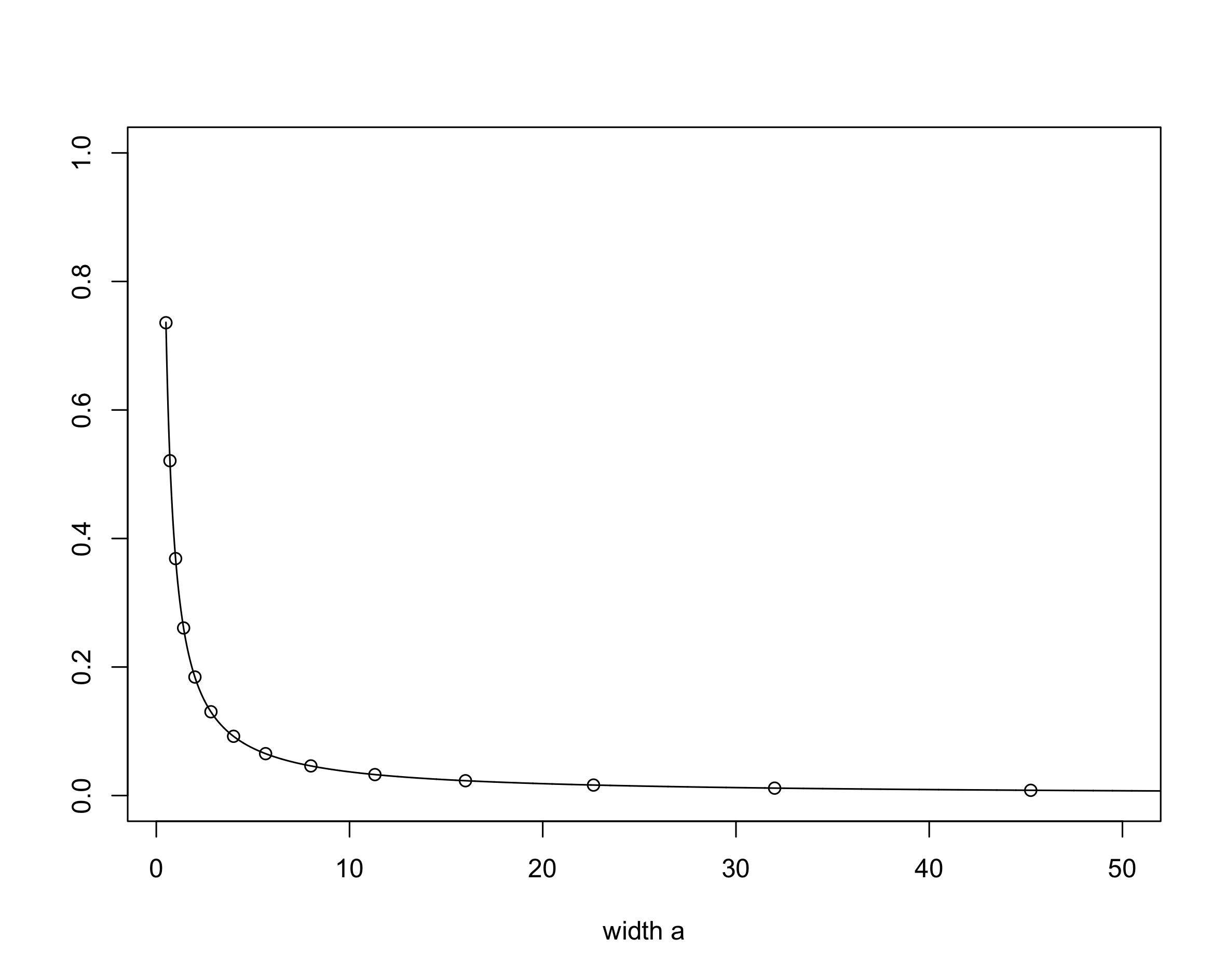}\\
\includegraphics[width = 0.49\linewidth]{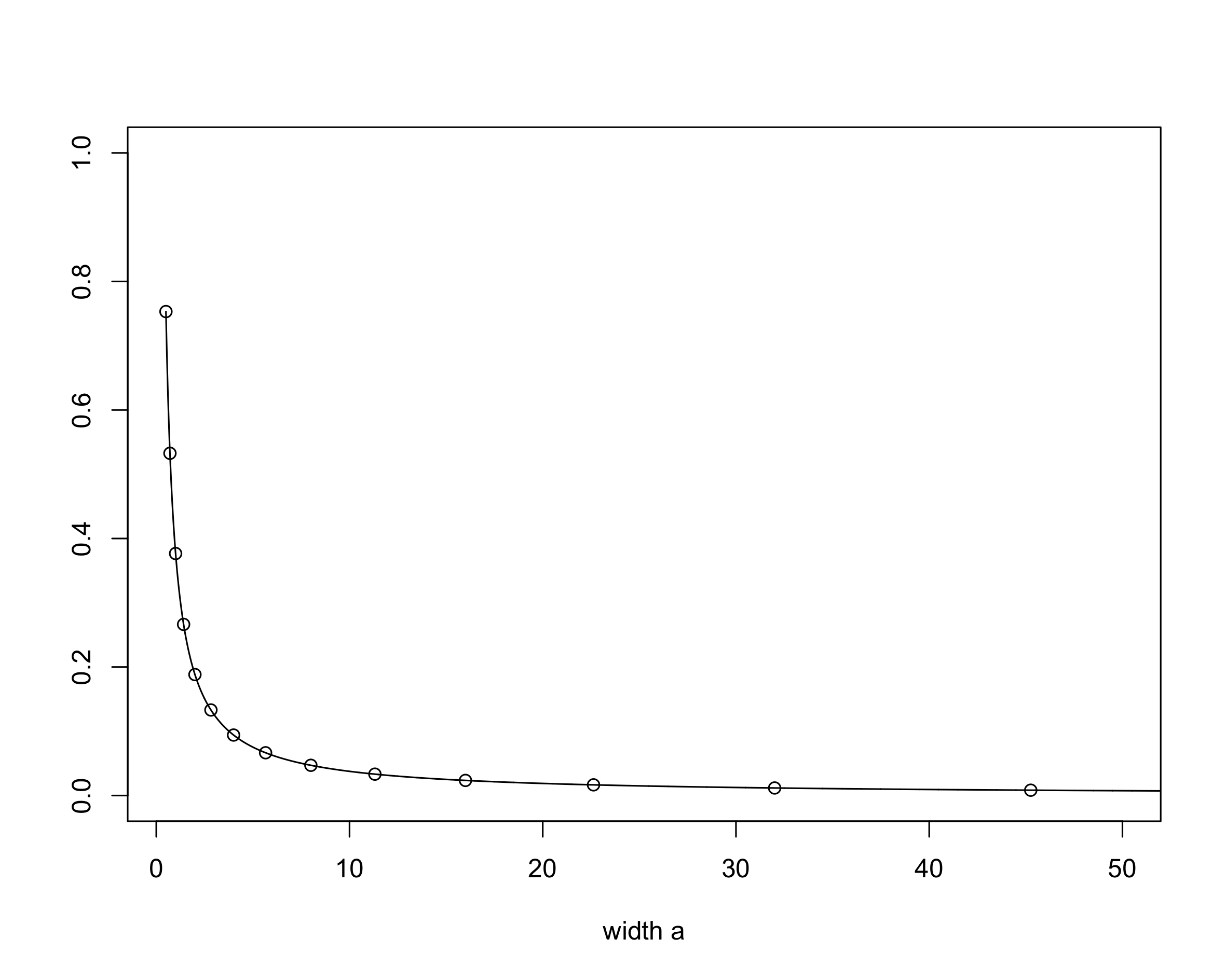}
\includegraphics[width = 0.49\linewidth]{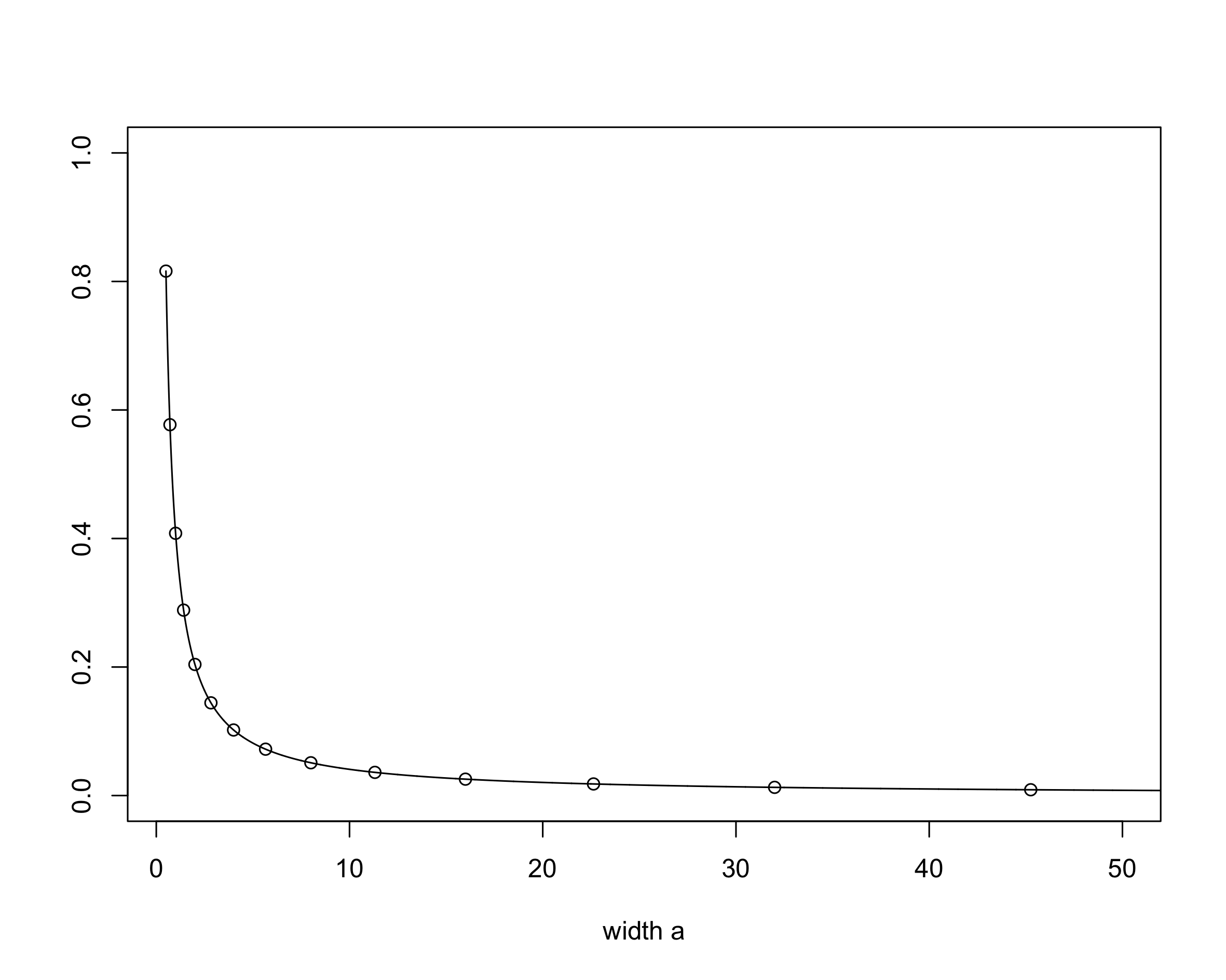}
\caption{Average clustering coefficient for the network with 
profile function $\varphi=\frac{1}{2a}\mathbbm{1}_{[0,a]}$ plotted against the width $a$, 
for $\gamma = 0.3$ in the left resp.\ $\gamma = 0.6$ in the right graphs. The graphs in the top row correspond to fixed edge density 1 while the bottom row corresponds to edge density 10.}
\label{Img:Localclustering_randomU}
\end{center}
\end{figure}

In Figure~\ref{Img:Localclustering_randomU} we see that the dependence of the average clustering coefficient with respect to the width~$a$ of the profile function is  of order $\frac{1}{a}$, a scaling that we also see in the analysis of the global clustering coefficient in the case $\gamma<\frac12$. 
Hence, the average clustering coefficient and the global clustering coefficient (if $\gamma<\frac12$) can be varied by the choice of $\varphi$ and can be made arbitrarily small by choosing $a$ large. Unlike with the global clustering coefficient, there is a mild dependence on $\beta$. 
Again, roughly speaking, large width of $\varphi$ encourages long edges and reduces clustering.
\medskip



\section{Asymptotics for typical edge lengths} \label{Section:EdgeLength}
In this section we study the distribution of the length of typical edges in $G_t$. 
We denote by $E(G)$ the set of edges of the graph $G$ and define $\lambda_t$, the (rescaled) \emph{empirical edge length distribution} in $G_t$, by
$$\lambda_t =
		\frac{1}{|E(G_t)|}\sum_{(\mathbf{x},\mathbf{y})\in E(G_t)}\delta_{t^{1/d} d({x},{y})}. $$

\begin{theorem}\label{Theorem:EdgeLengthDistribution}
	For every continuous and bounded $g\colon[0,\infty) \to \mathbb{R}$, we have
		\[
		\frac{1}{|E(G_t)|}\sum_{(\mathbf{x},\mathbf{y})\in E(G_t)} g\big(t^{1/d} d({x},{y}) \big) = 	\int g \,  d\lambda_t \to \int g \, d\lambda,
		\]
		in probability, as $t\to \infty$, where the limiting probability measure $\lambda$ on $(0,\infty)$ is given by
\begin{align}
\lambda([a,b)) = \frac{1-\gamma}{\beta}\int_0^1 \int_0^u\int_{a\leq|y|<b} \varphi\big(\beta^{-1} u^{1-\gamma}s^{\gamma}|y|^d\big)\, dy \, ds \, du.\label{edgeLengthLimitMeasure}
\end{align}
\end{theorem}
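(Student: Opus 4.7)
The plan is to realize the empirical edge length distribution as a local functional evaluated at each vertex and then apply the weak local limit from Theorem~\ref{PropADRCM}(iii) in its generalised form (as noted in the Remark), where the test function is allowed to depend continuously on the edge lengths of the \emph{rescaled} graph $h_t(\theta_\x G_t)$. Orienting every edge from its younger to its older endpoint, each edge is counted exactly once when we sum the outgoing edges over vertices, so
\begin{equation*}
\sum_{(\x,\y)\in E(G_t)} g\bigl(t^{1/d} d(x,y)\bigr) \;=\; \sum_{\x\in G_t}\,\sum_{\y=(y,s)\in \Y_\x(G_t)} g\bigl(t^{1/d} d(x,y)\bigr).
\end{equation*}
Define $\tilde h$ acting on a rooted graph $G$ with root $\x_0=(x_0,u_0)$ embedded in $\R^d\times(0,\infty)$ by $\tilde h(G)=\sum_{\y=(y,s)\in \Y_{\x_0}(G)} g(|y-x_0|)$. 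Since $h_t$ scales spatial distances by $t^{1/d}$, the inner sum above equals $\tilde h(h_t(\theta_\x G_t))$.

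Next I would verify the hypotheses of the generalised Theorem~\ref{PropADRCM}(iii): the functional $\tilde h$ depends only on the distance-one graph neighbourhood of the root, it depends continuously on edge lengths because $g$ is continuous, and boundedness of $g$ together with the bound $|\tilde h(G)|\le\|g\|_\infty\,|\Y_{\x_0}(G)|$ and the fact that the outdegree of the origin in $G_0^t$ is Poisson with parameter bounded by $\beta/(1-\gamma)$ uniformly in $t$ (by Proposition~\ref{Lemma:InOutprocess}(a)) gives $\tilde h\in\mathcal H_p$ for every $p>1$. Consequently
\begin{equation*}
\frac{1}{t}\sum_{\x\in G_t} \tilde h\bigl(h_t(\theta_\x G_t)\bigr)\;\longrightarrow\;\E\tilde h(G_0^\infty)\qquad\text{in probability.}
\end{equation*}

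To identify the limit, I would use Proposition~\ref{Lemma:InOutprocess}(a) together with Campbell's formula. Conditional on the root's birth time $U$ (uniform on $(0,1]$), the older neighbours of the root form a Poisson process with intensity $\varphi(\beta^{-1}U^{1-\gamma}s^\gamma|y|^d)\,dy\,ds$ on $\R^d\times[0,U)$, hence
\begin{equation*}
\E\tilde h(G_0^\infty)\;=\;\int_0^1\int_0^u\int_{\R^d} g(|y|)\,\varphi\bigl(\beta^{-1}u^{1-\gamma}s^\gamma|y|^d\bigr)\,dy\,ds\,du.
\end{equation*}
Applying the same convergence to the constant functional $g\equiv 1$ (equivalently, to the outdegree, cf.\ Theorem~\ref{Theorem:OutdegreeDistribution}) and using the normalisation $\int_{\R^d}\varphi(|y|^d)\,dy=1$ yields $|E(G_t)|/t\to \beta/(1-\gamma)$ in probability. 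Dividing the two limits and combining via Slutsky gives convergence in probability to
\begin{equation*}
\frac{1-\gamma}{\beta}\int_0^1\int_0^u\int_{\R^d} g(|y|)\,\varphi\bigl(\beta^{-1}u^{1-\gamma}s^\gamma|y|^d\bigr)\,dy\,ds\,du\;=\;\int g\,d\lambda,
\end{equation*}
as claimed, with $\lambda$ as in \eqref{edgeLengthLimitMeasure} (by a standard monotone class / Fubini argument passing between sets $\{a\le|y|<b\}$ and continuous $g$).

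The main obstacle is the very first step: we are not applying Theorem~\ref{PropADRCM}(iii) to a purely combinatorial functional of the rooted graph, but to one that depends continuously on the actual spatial positions in the rescaled graph. This generalisation is asserted in the Remark after Theorem~\ref{PropADRCM} and can be supplied by repeating the first/second moment computation in the proof of Theorem~\ref{PropADRCM}(iii), now using Campbell's theorem for the Poisson process carrying both positions and uniform marks; the resulting factorisation on well-separated pairs of vertices and the uniform $L^p$ control coming from the bounded outdegree make the argument go through with only cosmetic changes. The remaining points---reduction to continuous bounded $g$ via a density argument in $C_b([0,\infty))$ to obtain the set-valued statement \eqref{edgeLengthLimitMeasure}---are routine.
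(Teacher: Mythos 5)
Your proof is correct and follows essentially the same route as the paper: rewrite the edge sum as a sum over vertices of an outdegree-type functional of the rescaled rooted neighbourhood, invoke the generalised form of Theorem~\ref{PropADRCM}(iii) noted in the Remark (with the uniform $L^p$ bound supplied by the Poisson-bounded outdegree), and divide by $|E(G_t)|/t \to \beta/(1-\gamma)$. The only cosmetic difference is that the paper applies this to the indicator functionals $h_{a,b}$ and concludes via L\'evy--Prokhorov convergence, whereas you insert the continuous bounded $g$ directly and identify the limit by Campbell's formula.
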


\pagebreak[3]

\begin{proof}
For a finite graph $G$ with vertices positioned in $\R^d$ and marked by birth times and with a root vertex $\x$ placed at the origin define, for $a<b\in[0,\infty]$, 
the function
	\begin{align}h_{a,b}(G)=\sum_{\y\in\Y_\x(G)}\mathbbm{1}_{[a,b)}(\abs{y}).\label{edgeLengthFunctional}\end{align}
Observe that the law of $\lambda_t([a,b))$ in $G_t$ equals the law of 
\[ \frac{1}{|E(G^t)|}\sum_{\x \in \X^t} h_{a,b}(\theta_\x G^t).\]
As mentioned in the remark following the theorem,  Theorem~\ref{PropADRCM} is applicable to functions $h_{a,b}$ depending 
on the length of edges in the rescaled graphs $(G^t)_{t>0}$.  Since the sum in \eqref{edgeLengthFunctional} is dominated by the outdegree, $h_{a,b}\in\mathcal{H}_p$ for some $p>1$. We thus get
$$ \frac{1}{t}\sum_{\x \in \X^t} h_{a,b}(\theta_\x G^t) \longrightarrow \E[h_{a,b}(G_0^\infty)],$$
and since  Theorem~\ref{PropADRCM}~(iii) also gives $\abs{E(G^t)}/t\rightarrow \frac{\beta}{1-\gamma}$ 
and $\lambda([a,b)]=\frac{1-\gamma}{\beta}\E[h_{a,b}(G_0^\infty)]$ we infer that
	\smash{$\lambda_t([a,\infty))\longrightarrow\lambda([a,\infty))$}
in probability, as $t\rightarrow\infty$. Therefore, convergence in probability of $\lambda_t$ to $\lambda$ in the space of probability measures on $\R_+$, equipped with the L\'e{}vy-Prokhorov metric follows.
\end{proof}\smallskip

{\bf Remark:} Suppose there exists $\delta > 1$ such that the profile function satisfies $\varphi(x^d) \asymp 1 \wedge x^{-d\delta}$. Then the explicit formula for $\lambda$ can be used to calculate the tail behaviour of $\lambda$. A straightforward calculation gives that 
\smash{$\lambda([K,\infty)) \asymp 1 \wedge (\beta^{-1/d} K)^{-\eta},$} 
where 
\begin{equation}\label{etadef}\eta:= \min\big\{ d, d(\sfrac1\gamma - 1), d(\delta -1) \big\}.\end{equation}  
In particular, $\lambda$ has finite expectation if $\eta>1$ and infinite expectation if $\eta<1$.  \medskip

We denote by $M_0^\infty$ the length of the longest outgoing edge of the origin in $G^\infty_0$. By the construction of $\lambda$ above, $\lambda([K,\infty)]$ is the expected number of outgoing edges of length bigger than $K$ divided by the total number of outgoing edges from the origin. If $K$ is large this should be of similar order to the probability that $M_0^\infty\geq K$.  This is confirmed in the following lemma.

\begin{lemma}\label{Lemma:ExpectedMaximalEdgeLength}
$\E\left[(M_0^\infty)^a\right]$ is finite if $a<\eta$ and infinite if $a>\eta$, 
where $\eta$ is as defined in \eqref{etadef}.
\end{lemma}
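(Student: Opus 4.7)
The plan is to relate the tail $\P(M_0^\infty \geq K)$ to the tail of the edge length measure $\lambda$, which by the Remark preceding the lemma satisfies $\lambda([K,\infty)) \asymp K^{-\eta}$ for large $K$, and then conclude via the layer-cake identity
$$\E[(M_0^\infty)^a] = a\int_0^\infty K^{a-1}\,\P(M_0^\infty \geq K)\,dK.$$

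First I would condition on the birth time $U = u$ of the root. By Proposition~\ref{Lemma:InOutprocess}(a) the older neighbours of $(0,u)$ in $G_0^\infty$ form a Poisson point process with intensity $\varphi(\beta^{-1}u^{1-\gamma}s^\gamma|y|^d)\,dy\,ds$, so the number of outgoing edges of length at least $K$ is Poisson distributed with parameter
$$N(K,u) := \int_0^u\int_{|y|\geq K} \varphi\bigl(\beta^{-1}u^{1-\gamma}s^\gamma|y|^d\bigr)\,dy\,ds,$$
and hence $\P(M_0^\infty \geq K) = \int_0^1 (1 - e^{-N(K,u)})\,du$. Comparing with the explicit formula \eqref{edgeLengthLimitMeasure} shows that $\int_0^1 N(K,u)\,du = \tfrac{\beta}{1-\gamma}\lambda([K,\infty))$, which by the Remark is of order $K^{-\eta}$ for large $K$.

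For the upper bound, the elementary inequality $1 - e^{-x}\leq x$ yields $\P(M_0^\infty \geq K) \leq \int_0^1 N(K,u)\,du \asymp K^{-\eta}$, so the layer-cake identity gives $\E[(M_0^\infty)^a] < \infty$ whenever $a < \eta$. For the lower bound I would observe that $N(K,u)\leq N(0,u) = \beta/(1-\gamma) =: C$ uniformly in $K$ and $u$ by Proposition~\ref{Lemma:InOutprocess}(c); since $x\mapsto (1-e^{-x})/x$ is decreasing on $(0,\infty)$, this implies $1-e^{-N(K,u)} \geq \tfrac{1-e^{-C}}{C}\,N(K,u)$, so integrating over $u$ gives $\P(M_0^\infty \geq K) \geq c\,K^{-\eta}$ for some $c>0$ and all large $K$. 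For $a>\eta$ the layer-cake integral then diverges at infinity, forcing $\E[(M_0^\infty)^a] = \infty$.

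The delicate step in a result of this type is normally the case analysis needed to extract the exponent $\eta = \min\{d, d(1/\gamma-1), d(\delta-1)\}$ from the double integral $N(K,u)$, which branches according to whether the dominant contribution comes from the plateau of $\varphi$, from the preferential attachment factor $u^{1-\gamma}s^\gamma$, or from the polynomial tail of $\varphi$. In the present setup that work is already done for us in the Remark, so the remaining argument reduces to the linear sandwich of $1-e^{-N(K,u)}$ between $c_1\,N(K,u)$ and $N(K,u)$ afforded by the uniform boundedness of $N(K,u)$.
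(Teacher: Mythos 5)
Your proposal is correct and follows essentially the same route as the paper: both express $\P(M_0^\infty\geq K)$ via the Poisson description of the outgoing neighbourhood from Proposition~\ref{Lemma:InOutprocess}, identify the integrated Poisson parameter with $\tfrac{\beta}{1-\gamma}\lambda([K,\infty))$, use the uniform bound $\beta/(1-\gamma)$ to get $1-e^{-N(K,u)}\asymp N(K,u)$, and conclude from the tail behaviour of $\lambda$ together with the moment--tail integral. You merely spell out the two-sided sandwich of $1-e^{-x}$ that the paper leaves implicit in its ``$\asymp$'' step.
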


\begin{proof}
We show that the tail probability $\P\set{(M_0^\infty)^a \geq K}$ is of order $K^{-\eta/a}$ 
as $K\to \infty$. The number of outgoing edges with length at least $K^{1/a}$
in $G^\infty_0$ from the vertex $(0,u)$ at the origin  are Poisson distributed with  parameter 
$$\lambda_{K^{1/a},u}:=\lambda_{\Y_{(0,u)}^\infty}\big(\R^d\backslash (\{|x|< K^{1/a}\})\times (0,u]\big),$$
and hence
\begin{align*} \P\set{(M_0^\infty)^a \geq K } &= 
\int_0^1 1-\exp\big(-\lambda_{K^{1/a},u}\big) \, du \asymp \int_0^1 \lambda_{K^{1/a},u} \, du \asymp \lambda([K^{1/a},\infty)),\end{align*}
recalling the asymptotic edge length distribution $\lambda$ defined in \eqref{edgeLengthLimitMeasure}. The established tail behaviour of the measure
$\lambda$  yields $\P\set{(M_0^\infty)^a \geq K}\asymp 1 \wedge K^{-\eta/a}$.
\end{proof}
\pagebreak[3]

Using this, we can establish a result about the average rescaled length in the network~$G_t.$

\pagebreak[3]

\begin{theorem} \label{Theorem:ExpectedEdgeLength}
Suppose that there exists $\delta > 1$ such that the profile function satisfies $\varphi(x^d) \asymp 1 \wedge x^{-d\delta}$. Then, for all $a>0$ and $b\in [0,\frac{\eta}{a})$, there exists a posititive constant $C$, depending on $a,b,\gamma,\beta, \varphi$, such that
\begin{align}
	\frac{1}{|E(G_t)|}\sum\limits_{\x\in G_t}\bigg(\sum\limits_{\heap{\y\in G_t}{\x\overset{}{\leftrightarrow}\y}} \left(t^{1/d}d(\x,\y)\right)^a\bigg)^b \to C \label{ExpectedEdgeLengthStatement}
\end{align}
in probability, as $t\to \infty$. 
\end{theorem}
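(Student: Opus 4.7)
The plan is to apply Theorem~\ref{PropADRCM}(iii), in the form strengthened by the remark following it so as to permit $h$ to depend continuously on ages and on edge lengths measured in the rescaled graphs $h_t(\theta_\x G_t)$. Consider the rooted-graph functional
\[
h(G) := \bigg(\sum_{\y\,:\,\y \leftrightarrow \x} |y|^a \bigg)^b,
\]
with the root $\x$ of $G$ placed at the origin. For finite $t$, $h(h_t(\theta_\x G_t))$ equals $\big(\sum_{\y \leftrightarrow \x} (t^{1/d} d(x,y))^a\big)^b$, so the ratio in \eqref{ExpectedEdgeLengthStatement} has the same law as $|E(G^t)|^{-1} \sum_{\x \in G^t} h(\theta_\x G^t)$. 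Applying Theorem~\ref{PropADRCM}(iii) to the bounded out-degree functional, as in Theorem~\ref{Theorem:OutdegreeDistribution}, gives $|E(G^t)|/t \to \beta/(1-\gamma)$ in probability, so the problem reduces to showing $\tfrac{1}{t}\sum_{\x \in G^t} h(\theta_\x G^t) \to \E[h(G_0^\infty)]$ in probability together with strict positivity of the limit $C := \tfrac{1-\gamma}{\beta} \E[h(G_0^\infty)]$.

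The only non-routine ingredient in the application of Theorem~\ref{PropADRCM}(iii) is the uniform moment bound $h \in \mathcal{H}_p$ for some $p > 1$. Since $b < \eta/a$ by hypothesis, choose $p > 1$ with $abp < \eta$. Because the torus distance on $\T_t^d$ is dominated by the Euclidean distance on $\R^d$, it suffices to establish $\E[S^{bp}] < \infty$ for $S := \sum_{\y \leftrightarrow (0,U)} |y|^a$ in $G_0^\infty$. Conditioning on the age $U = u$ of the root, Proposition~\ref{Lemma:InOutprocess} identifies the neighbourhood as a Poisson point process with explicit intensity $\mu_u(dy\, ds)$ and finite total mass $\lambda_u$. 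When $bp \leq 1$, the subadditivity $S^{bp} \leq \sum_i |y_i|^{abp}$ combined with Campbell's formula reduces the task to verifying $\int_0^1 \int |y|^{abp} \mu_u(dy\, ds)\, du < \infty$. The three sub-integrals, radial at infinity, time integral near $s \downarrow 0$, and age integral near $u \downarrow 0$, converge precisely when $abp < d(\delta - 1)$, $abp < d(1/\gamma - 1)$ and $abp < d$ respectively, which together constitute $abp < \eta$. When $bp > 1$, the power-mean bound $S^{bp} \leq N^{bp-1} \sum_i |y_i|^{abp}$ (with $N$ the total degree of the root), together with the fact that $N \mid U = u$ is $\mathrm{Poisson}(\lambda_u)$ and that conditionally on $N$ the neighbours are i.i.d.\ with distribution $\mu_u/\lambda_u$, yields
\[
\E\big[S^{bp} \mid U = u\big] \leq C_{bp}\, \lambda_u^{bp-1} \int |y|^{abp} \, \mu_u(dy\, ds),
\]
and the analogous integration in $u \in (0,1]$ reproduces exactly the same three thresholds, all subsumed by $abp < \eta$.

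Positivity of $C$ follows from Proposition~\ref{Lemma:InOutprocess}(c): the out-degree of the origin in $G_0^\infty$ is Poisson with parameter $\beta/(1-\gamma) > 0$, so with positive probability $S > 0$ and consequently $\E[h(G_0^\infty)] = \E[S^b] > 0$. The main obstacle is the moment bound in the regime $bp > 1$, where the heavy-tailed mixed-Poisson indegree (cf.\ Lemma~\ref{Lemma:PowerLaw}) could a priori spoil integrability. The conditioning on $U$ cleanly decouples the tail of the indegree, which conditionally becomes merely a Poisson variable with finite parameter $\lambda_u \asymp u^{-\gamma}$, from the edge-length contribution; the three thresholds emerging from the subsequent $u$-integration precisely match the definition~\eqref{etadef} of $\eta$, so that the hypothesis $b < \eta/a$ is sharp for the argument to close.
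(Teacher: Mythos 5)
Your reduction to Theorem~\ref{PropADRCM}(iii), the normalisation via $|E(G^t)|/t\to\beta/(1-\gamma)$, and the positivity argument all match the paper, but the key integrability claim has a genuine gap, located in the regime $bp>1$. After your power-mean bound $\E[S^{bp}\mid U=u]\le C_{bp}\,\lambda_u^{bp-1}\int |y|^{abp}\,\mu_u(dy\,ds)$, the factor $\lambda_u^{bp-1}\asymp u^{-\gamma(bp-1)}$ does change the thresholds: it adds $\gamma(bp-1)$ to the exponent of the $u$-integrand at $u=0$, so (for the in-edge part) the $u$-integral converges only under a condition of the form $\gamma\, bp\,(1+a/d)<1$, which is strictly stronger than $abp<\eta$ whenever $bp>1$; the assertion that ``the same three thresholds'' are reproduced is false. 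Worse, this is not just a lossy estimate: because your functional sums over \emph{all} neighbours, the limit itself can be infinite under the stated hypotheses. Conditionally on $U=u$, the root has a Poisson number, of parameter $\asymp u^{-\gamma}$, of incoming edges whose lengths are of order up to $u^{-\gamma/d}$, so $S$ dominates (a constant times) a Poisson variable with parameter $\asymp u^{-\gamma}$ and $\E[S^{b}\mid U=u]\gtrsim u^{-\gamma b}$; hence $\E[S^b]=\infty$ as soon as $\gamma b\ge 1$, which is compatible with $ab<\eta$ (e.g.\ $d=1$, $\delta=2$, $\gamma=0.1$, $a=0.01$, $b=99$). So no choice of $p>1$ can give $h\in\mathcal H_p$ for the all-neighbour functional, and the heavy-tailed indegree you flag at the end is precisely what breaks the argument, re-entering through the $u$-integration.

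The paper's proof sidesteps this entirely by taking the inner sum only over the \emph{older} neighbours $\Y_\x(G)$ of the root (each edge then being counted exactly once, at its younger endpoint); this is how the quantity in \eqref{ExpectedEdgeLengthStatement} is identified there with $|E(G^t)|^{-1}\sum_{\x}h(\theta_\x G^t)$. With that restriction the relevant count is the outdegree, which is Poisson with parameter $\beta/(1-\gamma)$ \emph{independently of the age} of the root, so all its moments are finite; the paper bounds $h(G_0^t)\le (M_0^\infty)^{ab}\,\vert\Y_\x^\infty\vert^{b}$ and closes the $\mathcal H_p$ bound by H\"older's inequality combined with Lemma~\ref{Lemma:ExpectedMaximalEdgeLength}, which gives $\E[(M_0^\infty)^{\alpha}]<\infty$ for $\alpha<\eta$. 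If you restrict your functional in the same way, your Campbell/power-mean computation does go through --- the out-neighbourhood intensity yields exactly the thresholds $abp<d(\delta-1)$, $abp<d(1/\gamma-1)$, $abp<d$, and the prefactor is then a constant rather than $u^{-\gamma(bp-1)}$ --- giving a workable alternative to the paper's H\"older argument. As written, however, the inclusion of the in-edges is a gap that cannot be repaired without either dropping them or adding a hypothesis such as $b<1/\gamma$.
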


{\bf Remark:} If $\eta>1$ one can choose $a=b=1$ and this yields that the mean edge length in $G_t$ is of order $t^{-1/d}$. If $\eta<1$ (and in particular always if  $d=1$) the mean edge length is of larger order.
\medskip

\begin{proof}
Consider again a finite graph $G$ with vertices positioned in $\R^d$ and marked by birth times and with a root vertex $\x$ placed at the origin. Define $$h(G):=\Big(\sum_{\y\in\Y_\x(G)}\vert y\vert^a\Big)^b$$ 
and observe that the law of the left-hand side in \eqref{ExpectedEdgeLengthStatement} equals the law of
	\[\frac{1}{\abs{E(G^t)}}\sum_{\x \in \X^t}h(\theta_\x G^t).\]
It suffies to show that $h\in\mathcal{H}_p$ for some $p>1$, since Theorem~\ref{PropADRCM}~(iii) then ensures the convergence in probability to \smash{$\frac{1-\gamma}{\beta}\E[h(G_0^\infty)]$}, which is a positive constant. To this end recall $M_0^\infty$, the length of the longest outgoing edge of the root $\x$ in $G_0^\infty$ and observe that, almost surely, $h(G_0^t)\leq (M_0^\infty)^{a b}\vert\Y_{\x}^\infty\vert^b$. Since, by choice, $a b<\eta$, there exist some $p,q>1$, such that $\alpha:=p q a b <\eta$. Lemma~\ref{Lemma:ExpectedMaximalEdgeLength} then ensures $\E[(M_0^\infty)^\alpha]<\infty$ and, by applying H\"o{}lder's inequality to the observed bound for $h(G_0^t)$, we get
	\begin{align*}
		\sup_{t>0} \E\left[h(G_0^t)^p\right]\leq \left(\E \left[(M_0^\infty)^{\alpha}\right]\right)^{1/q} \left(\E \left[\vert \Y_\x^\infty\vert^{\frac{\alpha}{a(q-1)}}\right]\right)^{\frac{q-1}{q}} < \infty.
	\end{align*}
\end{proof}


\section{Conclusion and Outlook}

We have seen that properties of real networks, like scale-free degree distributions and clustering,
emerge from the simple building principle of preferential attachment to old and near nodes. 
Our model simplifies the spatial preferential attachment model in the literature and this allows for more explicit calculations of asymptotic network metrics. 
Moreover, we are therefore confident that we can also cover more complicated features that have proved elusive in the full, degree-based, model. In particular, for the small-world property, robustness and vulnerability of the age-based spatial preferential attachment model only partial results have been possible \cite{JacobMoerters2017,Hirsch2018}. A full study of these problems has been initiated in our group  and we hope to be able to report on new results soon.
\smallskip

Mathematically, our research is a step in the important direction of developing methods for networks that, due to clustering, cannot be locally approximated by trees. We have seen that in our case there is still a valuable description of a local limit given in terms of a tractable graph, the age-dependent random connection model. This model is interesting in its own right and methods from the theory of random geometric graphs as well as technqiues to investigate long-range percolation models can presumably be developed and enhanced to provide a powerful toolbox for its investigation.
\smallskip

The results we have achieved (and hope to achieve soon) are to some extent universal, i.e.~other network models that follow our building principles should show qualitatively very similar behaviour. But these results do not necessarily explain the full picture, other building principles could lead to similar behaviour and add to the explanation of the abundance of networks with the mentioned features that describe complex systems. A particularly interesting way to generate networks based on universal principles is to 
define random graphs that try to optimize certain functionals, for example in the form of Gibbs-measures on graphs with Hamiltonians that reward connectivity and punish long edges, see for example recent work of Mourrat and Valesin~\cite{Mouratt2018}. There remain a lot of interesting  challenges for probabilists in the area of random networks.
\bigskip

\pagebreak[3]

\appendix
\section{Simulation of the model}

In this section, we give an overview of the code used to generate the pictures shown throughout the paper. It is also used for estimating the limiting average clustering coefficient in Chapter $4$. The code can be freely accessed at: \url{http://www.mi.uni-koeln.de/~moerters/LoadPapers/adrc-model.R}.

The main objective of the code is to sample neighbours of a given vertex $(x,u)$ in the age-dependent random connection model in dimension 1 for given parameters $\beta$ and $\gamma$ and the profile function $\varphi$. 
Due to Proposition \ref{Lemma:InOutprocess}, which gives an explicit description of the neighbourhood of a given vertex, we can use rejection sampling to achieve this.
Since the support of the profile function $\varphi$ on $\mathbb{R} \times (0,1]$ may be unbounded and $\varphi$ is heavy-tailed in the second parameter, we restrict the sampling to a region with mass $q = 0.99$ with respect to $\varphi$.
This sampling works for arbitrary but reasonable choices of the profile function $\varphi$ and parameters $\beta$, $\gamma$; we provide and use an optimized sampling algorithm for $\varphi=\frac{1}{2a}\mathbbm{1}_{[0,a]}$ with $a \geq \frac{1}{2}$. The advantage of studying this class of  $\varphi$ is that expressions can be analytically simplified, which allows us to improve the algorithm by dividing the region from which the points are sampled into sub-regions with equal mass with respect to $\varphi$, thus increasing the acceptance rate for points sampled far away from $(x,u)$. That is, the code first selects one of these equally likely sub-regions uniformly at random and then points are sampled therein until one is accepted. The numerical optimization method \texttt{nlminb} is used to calculate the boundaries of the ranges, i.e. quantiles of the mass with respect to $\varphi$.

A first application of the sampling is the estimation of the expected local clustering coefficient of a vertex $(0,u)$ in the age-dependent random connection model (see Figure \ref{Img:Localclustering_varyingu}) and by Theorem \ref{Theorem:Clustering} also the average clustering coefficient for the age-based preferential attachment network (see Figure \ref{Img:Localclustering_randomU}). To this end, the code samples pairs of neighbours of $(0,u)$ and averages the probability that the pair is connected.
A second application of the sampling is generating heatmaps of the neighborhoods of a given vertex (see Figure \ref{Figure:Neighbourhood}). The heatmaps are generated using the R library \texttt{MASS} and function \texttt{kde2d} by estimating the heat kernel for the sampled neighbouring vertices. Further properties thereof can be studied with additional heatmap generating functions that we provide.

\vspace{0.5cm}

{\bf Acknowledgments:} We would like to thank Sergey Foss for the invitation to the 
\emph{Stochastic Networks 2018} workshop at ICMS, Edinburgh, where this paper was first presented.

\bigskip

\bibliographystyle{abbrv}

\bibliography{agedependent}

\end{document}